\documentclass[11pt]{article}
\usepackage[margin=1in]{geometry}
\usepackage{amsmath,amssymb,amsthm,mathtools}
\usepackage{enumitem}
\usepackage[colorlinks=true,linkcolor=blue,citecolor=blue,urlcolor=blue]{hyperref}

\title{Radial Transform Extremality\\for the Siblings of the Coupon Collector}
\author{Christopher D. Long}
\date{}

\newtheorem{theorem}{Theorem}[section]
\newtheorem{lemma}[theorem]{Lemma}
\newtheorem{proposition}[theorem]{Proposition}
\newtheorem{corollary}[theorem]{Corollary}
\newtheorem{definition}[theorem]{Definition}
\newtheorem{remark}[theorem]{Remark}

\newcommand{\E}{\mathbb E}
\newcommand{\Pbb}{\mathbb P}
\newcommand{\R}{\mathbb R}

\newcommand{\dd}{\,d}

\newcommand{\e}{\mathrm e}

\newcommand{\coef}[2]{[#1]#2}
\newcommand{\ind}[1]{\mathbf 1_{\{#1\}}}
\newcommand{\cB}{\mathcal B}

\allowdisplaybreaks

\begin{document}
\maketitle

\begin{abstract}
In the siblings version of the coupon collector, a main collector stops when every coupon type has appeared once. Duplicates are passed successively to siblings, and $U_j^N$ denotes the number of empty spaces in the $j$th collector's album at the main completion time. We prove finite-$N$ radial transform strengthenings of the uniform-probability extremality principle. For every $N\ge2$, every $j\ge2$, every positive nonuniform probability vector $p$, and the ray $p(\theta)=u+\theta(p-u)$ from the uniform vector $u$, the full probability generating function $\E_{p(\theta)}z^{U_j^N}$ is strictly decreasing in $\theta$ for $z>1$ and strictly increasing in $\theta$ for $0<z<1$. Thus the same full PGF has opposite radial monotonicity on the two sides of $z=1$, the left side giving a radial Laplace-transform order. At the coefficient level, along every nonconstant ray from the uniform vector, uniform probabilities maximize every binomial moment of $U_j^N$, equivalently giving a finite absolutely-monotone/binomial-transform order. The proof of the right-PGF and binomial-moment theorem is exact and finite-dimensional. It uses Poissonization, a marked Poissonized PGF identity, a normalized alternating subset expansion, and a positive-kernel radial derivative formula obtained from a local cumulative-polynomial dissipation lemma. The Laplace-transform theorem follows from a separate Gamma-mixture race representation.
\end{abstract}

\begin{center}
\small
\textbf{2020 Mathematics Subject Classification.} Primary 60C05; Secondary 05A15, 60E15.\\
\textbf{Keywords.} coupon collector; siblings problem; probability generating functions; factorial moments; transform orders; Poissonization.
\end{center}

\section{The model and the main theorem}

Fix $N\ge2$ and a positive probability vector
\[
        p=(p_1,\ldots,p_N),\qquad p_i>0,
        \qquad \sum_{i=1}^N p_i=1.
\]
Coupons are sampled independently with replacement. The main collector stops at the first time all $N$ types have appeared at least once. Duplicates are passed to the next collector; if that collector already has that type, the coupon is passed to the next sibling, and so on. Let $U_j^N$ be the number of empty spaces in the $j$th collector's album at the main completion time. Equivalently, $U_j^N$ is the number of coupon types whose total count is less than $j$ at the first-completion time.

Let
\[
        u=(1/N,\ldots,1/N).
\]
The main coefficient theorem is the following radial right-PGF extremality result. Here and throughout, \emph{radial} means along line segments in the simplex that start at the uniform vector, namely paths of the form $\theta\mapsto u+\theta(p-u)$. The complementary left-PGF, or Laplace-transform, direction for $0<z<1$ is proved separately in Theorem~\ref{thm:laplace}.

\begin{theorem}[Radial PGF extremality]\label{thm:main}
For every $N\ge2$, every $j\ge2$, every positive nonuniform probability vector $p$, and every $z>1$, the map
\[
        \theta\mapsto \E_{u+\theta(p-u)}z^{U_j^N}
\]
is strictly decreasing on $(0,1]$. In particular,
\[
        \E_p z^{U_j^N}<\E_u z^{U_j^N}\qquad (z>1,\ p\ne u).
\]
Moreover, for every $1\le m\le N$, the map
\[
        \theta\mapsto \E_{u+\theta(p-u)}\binom{U_j^N}{m}
\]
is strictly decreasing on $(0,1]$. In particular,
\[
        \E_p \binom{U_j^N}{m}\le \E_u \binom{U_j^N}{m}\qquad(m\ge1).
\]
The inequality is strict for $1\le m\le N$ and $p\ne u$; for $m>N$ both sides are zero.
\end{theorem}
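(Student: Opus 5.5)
The plan is to reduce the full right-PGF statement to the binomial-moment statement. For $z>1$,
\[
\E_p z^{U_j^N}=\sum_{m=0}^N (z-1)^m\,\E_p\binom{U_j^N}{m}.
\]
Every coefficient $(z-1)^m$ is positive, and the $m=0$ term equals $1$ whatever $p$ is. So it suffices to show that each $\theta\mapsto \E_{p(\theta)}\binom{U_j^N}{m}$, $1\le m\le N$, is strictly decreasing on $(0,1]$. Here $p(\theta)=u+\theta(p-u)$. The statement for $m>N$ is trivial because $U_j^N\le N$.

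For the binomial moments I would first Poissonize. Let coupons of type $i$ arrive as independent Poisson processes $\Pi_i$ of rate $p_i$. Then $U_j^N=\#\{i:\Pi_i(T)<j\}$, where $T=\max_i \tau_i$ and $\tau_i$ is the first arrival of type $i$. Since $\binom{U}{m}$ counts $m$-subsets $S$ of deficient types,
\[
\E\binom{U_j^N}{m}=\sum_{|S|=m}\Pbb\bigl(\Pi_i(T)<j\ \ \forall i\in S\bigr).
\]
Next, condition on the type $k$ that is collected last and on the time $T=t$. The types are independent, so each event factorizes into one-type quantities:
\begin{itemize}
\item for $i\notin S\cup\{k\}$, the factor is $1-e^{-p_it}$;
\item for $i\in S\setminus\{k\}$, the factor is $\Pbb(1\le \Pi_i(t)\le j-1)=\sum_{r=1}^{j-1}e^{-p_it}(p_it)^r/r!$;
\item for the last type $k$, the factor is the density $p_ke^{-p_kt}$; if $k\in S$ this type has count $1<j$ automatically.
\end{itemize}
This gives an explicit integral formula. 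Expanding the products $\prod(1-e^{-p_it})$ by inclusion–exclusion yields an alternating subset expansion. Its summands are integrals of $e^{-(\sum_{A}p_i)t}$ against polynomials in $t$, and these integrate to rational functions of the partial sums $\sigma_A=\sum_{i\in A}p_i$.

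The key step is to differentiate in $\theta$ along the ray. The derivative is a sum over pairs $(a,b)$ of terms $(p_a-p_b)$ times the difference of partial derivatives $\partial_{p_a}-\partial_{p_b}$. I would aim to reorganize it as
\[
\frac{d}{d\theta}=-\sum_{a<b}(p_a-p_b)^2\,K_{ab}(\theta),
\]
with a kernel $K_{ab}>0$. This is the standard way of proving Schur-concavity type statements, because the $\theta$-derivative along a ray from $u$ has the form $\sum_i (p_i-u_i)\partial_i F$. It becomes $\frac1\theta\sum_i (p_i(\theta)-\tfrac1N)\partial_i F$, which equals $\frac{1}{2N\theta}\sum_{a,b}(p_a-p_b)(\partial_aF-\partial_bF)$.

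So the task reduces to showing that $(p_a-p_b)(\partial_aF-\partial_bF)<0$ whenever $p_a\ne p_b$. To get this, I would use the symmetry swapping $a$ and $b$ to write $\partial_aF-\partial_bF$ as $(p_a-p_b)$ times an integral of a positive quantity. That positivity should come from a one-variable lemma about the cumulative Poisson polynomial $Q_j(x)=\sum_{r<j}x^r/r!$: the map $x\mapsto e^{-x}Q_j(x)$ has derivative $-e^{-x}x^{j-1}/(j-1)!$. This "dissipation" identity gives a signed, telescoping form for the difference between two types, and it is exactly why $j\ge2$ is needed (for $j=1$ the count is never deficient).

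I expect the main obstacle to be this positivity of the kernel after the alternating inclusion–exclusion. Signs cancel, so positivity is not termwise. To handle it I would keep the factors $1-e^{-p_it}$ unexpanded, so that they are positive probabilities, and isolate only the two coordinates $a,b$. Then I would prove a two-variable inequality. The quantity to bound is the contribution of types $a,b$ under the joint event: both are collected by $t$, at most one of them is last, and the deficiency constraints hold. I would show this contribution, viewed as a function of $(p_a,p_b)$ with $p_a+p_b$ fixed, is Schur-concave with a strict inequality, by integrating in $t$ after an integration by parts against the density of $T$. Strictness for every $1\le m\le N$ then follows because the kernel is strictly positive for $j\ge2$. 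Together with the reduction above, this gives strict decrease of the PGF for $z>1$.
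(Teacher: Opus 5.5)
Your reduction of the PGF to binomial moments is fine. The gap is the key step, ``the task reduces to showing that $(p_a-p_b)(\partial_aF-\partial_bF)<0$ whenever $p_a\ne p_b$,'' because that statement is false. The termwise condition at a point does not depend on which ray passes through it, and every positive $p$ lies on some ray from $u$. So proving it would establish the Schur--Ostrowski criterion, hence Schur-concavity of $\E_p\binom{U_j^N}{m}$. For $m=1$ this is Schur-concavity of $\E_pU_j^N$, which Doumas and Spektor showed fails.

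A direct check: for $N=3$, $j=2$, the coefficient formula gives
\[
\E_pU_2^3=1+2\sum_{a<b}p_ap_b\Bigl(\frac{1}{(p_a+p_b)^2}-1\Bigr).
\]
At $p=(0.6,0.35,0.05)$ this is about $1.3111$. After the Robin Hood transfer to $(0.55,0.40,0.05)$ it drops to about $1.3028$. Equivalently, at $p$ you have $p_1>p_2$ but $\partial_1F-\partial_2F\approx0.214>0$. The radial derivative at $p$ is still negative (about $-1.42$), but only because the pairs $(1,3)$ and $(2,3)$ outweigh the pair $(1,2)$. So the kernel your ambient decomposition produces, $-(\partial_aF-\partial_bF)/(p_a-p_b)$, is negative for some pairs. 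The same example, with $p_3=0.05$ frozen, also refutes your two-variable claim of Schur-concavity in $(p_a,p_b)$ with $p_a+p_b$ fixed.

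The missing idea is that the regrouping into $-\sum_{a<b}(p_a-p_b)^2K_{ab}$ with $K_{ab}>0$ has to use the radial structure. The paper does this subset by subset in normalized coordinates rather than in ambient $p$-coordinates.

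\begin{itemize}
\item It writes $C_m^{(j)}=\E\binom{U_j^N-1}{m}$ as the alternating sum $\sum_B(-1)^{|B|-m-1}\Phi_{m,j}(\alpha^B)$.
\item Along a ray from $u$, each $\alpha^B$ moves along a ray from the uniform point of its own face: $\alpha_i'=(|B|\alpha_i-1)/(N\theta p_B)$. This step uses $h_i=(p_i-1/N)/\theta$.
\item Pair polarization is then done only over pairs inside $B$. The cross terms with $a\in B$, $b\notin B$, which spoil your decomposition, never appear.
\item Each $\Phi_{m,j}$ does satisfy the Schur--Ostrowski sign condition on its own simplex: $-(\partial_a\Phi_{m,j}-\partial_b\Phi_{m,j})/(\alpha_a-\alpha_b)$ is a sum of polynomials $Q_{J,T}$ with nonnegative coefficients. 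Proving this is the real work. It needs a multivariate cancellation between one-marked and two-marked terms, using the weights $1-\alpha_S$ and the simplex constraint. Your one-variable identity for $\e^{-x}\sum_{r<j}x^r/r!$ is not enough.
\item Since $(\alpha_a-\alpha_b)^2=(p_a-p_b)^2/p_B^2$, for each monomial the alternating sum over $B\supseteq\{a,b\}\cup T$ collapses to a strictly positive integral of the form $\int_0^\infty t^{L-1}\e^{-(p_a+p_b+p_T)t}\prod_{\ell\notin\{a,b\}\cup T}\bigl(1-\e^{-p_\ell t}\bigr)\dd t$. This is where your instinct to keep the factors $1-\e^{-p_\ell t}$ intact pays off.
\end{itemize}

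The resulting $K_{ab}$ is a different object from your kernel, and it exists only along rays from $u$.
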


The proof is given in Sections~\ref{sec:pgf}--\ref{sec:proof-main}. The central point is that the final coupon is always empty in the $j$th sibling's album. Therefore it is natural to remove that deterministic empty space and study
\[
        C_m^{(j)}(p):=\E_p\binom{U_j^N-1}{m},
        \qquad 0\le m\le N-1.
\]
We prove that each nonconstant coefficient $C_m^{(j)}$, $1\le m\le N-1$, is strictly decreasing along every nonconstant ray from the uniform vector. Since every positive probability vector $p$ is the endpoint of the ray $u+\theta(p-u)$, the radial theorem gives uniform as the global maximizer over the positive simplex for these functionals. The result should not be read as a majorization comparison between arbitrary positive nonuniform vectors.

The classical multiple-cover coupon-collector literature goes back to Newman and Shepp \cite{NewmanShepp}. The siblings model and related brotherhood problems have been studied by explicit generating-function and Poissonization methods; representative references include \cite{AdlerOrenRoss,DoumasPapanicolaouSiblings,FoataHanLass,FoataZeilberger}. Those works give exact formulas and asymptotic information, especially for expectations and limiting regimes. The expectation-level radial extremality theorem, corresponding to the case $m=1$ below, appears in the companion paper \cite{LongSiblings}, which also studies monotonicity in $N$ at uniform and limit laws. Doumas and Spektor \cite{DoumasSpektor} also proved the same expectation-level radial extremality theorem and showed that the expectation functional is not Schur-concave. The contribution here is different in kind: we strengthen expectation-level extremality to finite-$N$ radial transform extremality, including the right-side probability generating function, the left-side Laplace transform, and, more sharply, all binomial moments.

\begin{remark}[Order-theoretic scope]\label{rem:scope}
The right-PGF theorem for $z>1$, the Laplace-transform theorem for $0<z<1$, and the finite absolutely-monotone order corollary below are transform-order statements. We use standard stochastic-order terminology as in \cite{ShakedShanthikumar}. The results do not assert ordinary stochastic order, increasing-convex order, hazard-rate order, likelihood-ratio order, or monotone-likelihood-ratio order. Likewise, the results are radial statements away from the uniform vector; they do not compare two arbitrary positive nonuniform probability vectors. Ordinary stochastic order would require coefficientwise positivity of the tail-kernel polynomial
\[
        -\frac{\partial_\theta \E_{p(\theta)}z^{U_j^N}}{z-1},
\]
not merely the transform inequalities proved here.
\end{remark}

\section{Poissonization and the PGF identity}\label{sec:pgf}

Let $N_i(t)$ be independent Poisson processes with rates $p_i$. The superposed process has rate one, and its labels are iid with law $p$. Thus the continuous-time and discrete coupon-count processes have the same embedded label sequence.

Let
\[
        E_i=\inf\{t:N_i(t)\ge1\},
        \qquad X_1=\max_{1\le i\le N}E_i.
\]
The main collector completes at time $X_1$. At this time coupon $i$ is absent from the $j$th album exactly when
\[
        1\le N_i(X_1)\le j-1.
\]
The coupon attaining the last first arrival has count exactly one at $X_1$, and therefore is empty in the $j$th sibling's album for every $j\ge2$.

Define the truncated Poisson polynomial
\[
        A_j(x):=\sum_{r=1}^{j-1}\frac{x^r}{r!}.
\]
For a non-final coupon observed at time $t$, the probability-weighted contribution of the event ``appeared at least once'', with an extra factor $z$ if it is empty in the $j$th sibling's album, is
\[
        1-\e^{-p_i t}+(z-1)\e^{-p_i t}A_j(p_i t).
\]
Indeed, $1-\e^{-p_i t}$ enforces at least one arrival, and
\[
        \Pbb(1\le N_i(t)\le j-1)=\e^{-p_i t}A_j(p_i t).
\]

\begin{proposition}[PGF identity]\label{prop:pgf-identity}
For every $z\ge0$,
\begin{equation}\label{eq:pgf-identity}
        \E_p z^{U_j^N}
        =z\sum_{k=1}^N\int_0^\infty
        p_k\e^{-p_k t}
        \prod_{i\ne k}
        \left(1-\e^{-p_i t}+(z-1)\e^{-p_i t}A_j(p_i t)\right)\dd t .
\end{equation}
\end{proposition}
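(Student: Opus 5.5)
We prove the identity by conditioning on which coupon type completes the main collection and on the time at which that happens, working entirely in the Poissonized model introduced above. Since the embedded label sequence of the superposed process coincides with the discrete sampling sequence, and the counts $N_i(X_1)$ at the completion time are functions of that label sequence up to the completion step, $U_j^N$ has the same law in both models. It therefore suffices to compute $\E z^{U_j^N}$ with $U_j^N=\#\{i:1\le N_i(X_1)\le j-1\}$, where every type has $N_i(X_1)\ge1$ automatically.

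The first step is to decompose by the index $k$ attaining $X_1=\max_i E_i$. Ties have probability zero, since the $E_i$ are independent exponentials with rates $p_i$. On the event $\{E_k\in dt,\ E_i<t\ \forall i\ne k\}$ the following hold.
\begin{enumerate}[label=(\roman*)]
\item Type $k$ has $N_k(t)=1$ exactly. Because $j\ge2$, it is empty in the $j$th album and contributes the factor $z$.
\item For $i\ne k$, the restrictions of the processes $N_i$ to $[0,t]$ are independent of each other and of $E_k$.
\end{enumerate}
By (ii), we have
\[
\E\bigl[z^{U_j^N};\,E_k\in dt,\ E_i<t\ (i\ne k)\bigr]=z\,p_k\e^{-p_kt}\,dt\prod_{i\ne k}\E\bigl[z^{\ind{1\le N_i(t)\le j-1}};N_i(t)\ge1\bigr].
\]

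The second step evaluates each factor:
\[
\E\bigl[z^{\ind{1\le N_i(t)\le j-1}};N_i(t)\ge1\bigr]=\Pbb(N_i(t)\ge1)+(z-1)\Pbb(1\le N_i(t)\le j-1).
\]
Using the Poisson formula $\Pbb(1\le N_i(t)\le j-1)=\e^{-p_it}A_j(p_it)$ stated above, this equals $1-\e^{-p_it}+(z-1)\e^{-p_it}A_j(p_it)$. Integrating over $t\in(0,\infty)$ and summing over $k$ yields \eqref{eq:pgf-identity}.

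The justification of the interchange is straightforward for $z\ge0$. Every integrand is nonnegative, because the factor above is $\E[z^{(\cdot)};\cdot]\ge0$, so Tonelli applies. Moreover $U_j^N\le N$, so all expectations are finite. The only point needing care is the step I expect to be the main (mild) obstacle: the transfer from discrete to continuous time, and the independence claim in (ii). For the transfer, I would note that the completion step index is a stopping time for the label sequence, and that $N_i(X_1)$ equals the number of type-$i$ labels among the first $\tau$ labels, where $\tau$ is that index. Hence the law of $U_j^N$ is identical in both models. For the independence, I would invoke the independence of the component Poisson processes: conditioning on $E_k=t$ involves only $N_k$, so the other processes on $[0,t]$ remain independent Poisson processes.
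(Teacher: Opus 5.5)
Your proposal is correct and follows essentially the same route as the paper: condition on the index $k$ of the last first arrival and its time $t$, take the factor $z$ from $N_k(t)=1$, and use independence of the other Poisson processes to factor the weights $\Pbb(N_i(t)\ge1)+(z-1)\Pbb(1\le N_i(t)\le j-1)$. Your remarks on transferring from discrete to continuous time via the embedded label sequence and stopping index, and on Tonelli for $z\ge0$, make explicit points the paper leaves implicit.
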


\begin{proof}
The first-arrival times $E_1,\ldots,E_N$ are independent continuous random variables, so the final first arrival is almost surely unique. Condition on coupon $k$ being this final first arrival and on its first arrival time being $t$. The density in the $k$ coordinate is $p_k\e^{-p_k t}\dd t$. Coupon $k$ contributes one factor of $z$, since it has appeared exactly once at time $t$.

For $i\ne k$, the required weighted contribution is
\[
        \E\left[z^{\ind{1\le N_i(t)\le j-1}}
        \ind{N_i(t)\ge1}\right]
        =\Pbb(N_i(t)\ge1)+(z-1)\Pbb(1\le N_i(t)\le j-1).
\]
Since $N_i(t)$ is Poisson with mean $p_i t$, this equals
\[
        1-\e^{-p_i t}+(z-1)\e^{-p_i t}A_j(p_i t).
\]
Independence of the non-final Poisson processes gives the product over $i\ne k$. Summing over the possible final coupon $k$ and integrating over $t$ proves the identity.

\end{proof}

\begin{theorem}[Radial Laplace-transform order]\label{thm:laplace}
Fix $N\ge2$, $j\ge2$, and a positive nonuniform probability vector $p$. Let
\[
        p(\theta)=u+\theta(p-u),\qquad 0\le\theta\le1.
\]
For every $0<z<1$, the map
\[
        \theta\mapsto \E_{p(\theta)}z^{U_j^N}
\]
is strictly increasing on $[0,1]$. Equivalently, for every $s>0$,
\[
        \theta\mapsto \E_{p(\theta)}\e^{-sU_j^N}
\]
is strictly increasing on $[0,1]$. Thus, if $0\le\theta_1<\theta_2\le1$, then
\[
        U_{j,p(\theta_2)}^N\le_{\rm Lt}U_{j,p(\theta_1)}^N,
\]
where $X\le_{\rm Lt}Y$ means
\[
        \E \e^{-sX}\ge \E \e^{-sY}\qquad(s>0).
\]
\end{theorem}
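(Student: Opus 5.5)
The plan is to prove Theorem~\ref{thm:laplace} through a race representation of the PGF identity \eqref{eq:pgf-identity}, combined with a scaling argument that turns the radial derivative into the derivative along the all-ones direction.

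\emph{Step 1: race representation.} Fix $0<z<1$. Rewrite the non-final factor in \eqref{eq:pgf-identity} as
\[
1-\e^{-x}+(z-1)\e^{-x}A_j(x)=\Pbb(\mathrm{Pois}(x)\ge j)+z\,\Pbb(1\le \mathrm{Pois}(x)\le j-1).
\]
Using $\Pbb(\mathrm{Pois}(x)\ge r)=\Pbb(\Gamma_r\le x)$, with $\Gamma_r$ a $\mathrm{Gamma}(r,1)$ variable, this equals $\Pbb(\tau_i\le t)$ at $x=p_it$, where $\tau_i$ is defined as follows. Attach to coupon $i$ an independent Bernoulli$(z)$ mark. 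If the mark is $1$, let $\tau_i=E_i$, the first arrival time. If the mark is $0$, let $\tau_i$ be the $j$th arrival time of $N_i$. Thus $\tau_i$ is the mixture $z\,\mathrm{Exp}(p_i)+(1-z)\,\mathrm{Gamma}(j,p_i)$. Then
\[
\E_pz^{U_j^N}=z\sum_k\Pbb\bigl(E_k>\max_{i\ne k}\tau_i\bigr),
\]
where $E_k$ is independent of the $\tau_i$ with $i\ne k$. In this form every ingredient is a probability of a race between independent Gamma-mixture clocks.

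\emph{Step 2: reduction to a common-rate perturbation.} Define
\[
\Phi(q)=z\sum_k\int_0^\infty q_k\e^{-q_kt}\prod_{i\ne k}F(q_it)\dd t
\]
for all positive vectors $q$, not only probability vectors. Here $F(x)$ denotes the factor above. The substitution $t\mapsto t/c$ shows that $\Phi(cq)=\Phi(q)$, so $\Phi$ is homogeneous of degree zero and Euler's identity gives $\sum_i q_i\partial_i\Phi=0$.

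Along the ray we have $p(\theta)-u=\theta(p-u)$. Hence
\[
\frac{d}{d\theta}\Phi(p(\theta))=\frac1\theta\sum_i\Bigl(p_i(\theta)-\tfrac1N\Bigr)\partial_i\Phi=-\frac{1}{N\theta}\sum_i\partial_i\Phi(p(\theta)).
\]
Strict increase on $(0,1]$ is therefore equivalent to $\sum_i\partial_i\Phi(q)<0$ at every positive nonuniform $q$. (Continuity at $\theta=0$ then extends the conclusion to $[0,1]$.) Equivalently, adding a common rate $\delta$ to every coupon must strictly decrease the left PGF. This is natural, since $q+\delta\mathbf 1$ is a positive multiple of a point on the ray strictly closer to $u$. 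At $q=u$ the sum vanishes by symmetry, consistent with the factor $\theta$.

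\emph{Step 3: sign of the all-ones derivative, the main obstacle.} Differentiating under the integral, I would use $\partial_q[q\e^{-qt}]=(1-qt)\e^{-qt}$ and $\partial_qF(qt)=tF'(qt)$. I would then integrate by parts in $t$ against the identity $\sum_i q_i\partial_{q_i}(\text{integrand})=t\,\partial_t(\text{integrand})$ to eliminate boundary-type terms. The goal is to express $-\sum_i\partial_i\Phi$ as an integral of a sum over pairs $(k,i)$ with $i\ne k$. Each pair term should look like
\[
(q_i-q_k)\bigl(G(q_it)-G(q_kt)\bigr)
\]
against a positive weight, with $G$ monotone, so that it is nonnegative and strictly positive for some pair when $q$ is nonuniform. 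The natural candidate for $G$ is the hazard-type ratio $xF'(x)/F(x)$ relative to the exponential clock $x\e^{-x}/(1-\e^{-x})$.

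I expect the hardest point to be verifying the required monotonicity of this ratio for the Gamma mixture. First I would try to show that $x\mapsto \Pbb(\tau\le x)/\Pbb(E\le x)$ is monotone, i.e.\ a likelihood-ratio comparison between $\mathrm{Exp}(1)$ and the mixture $z\,\mathrm{Exp}(1)+(1-z)\,\mathrm{Gamma}(j,1)$. That comparison holds because the Gamma$(j)$ density divided by the exponential density is $x^{j-1}/(j-1)!$, which is increasing, and mixing with weight $0<z<1$ preserves this. Strictness would come from $z<1$ and $j\ge2$. The equivalence with the Laplace-transform order for $s>0$ then follows by setting $z=\e^{-s}$.
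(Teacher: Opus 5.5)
Your Steps~1 and~2 are correct. Step~1 is the same Gamma-mixture race the paper uses, $\Phi$ is homogeneous of degree zero, and $\frac{d}{d\theta}\Phi(p(\theta))=-\frac{1}{N\theta}\sum_i\partial_i\Phi(p(\theta))$. The gap is Step~3. It carries the entire proof, and you only describe the shape you hope for; you never derive the pair decomposition.

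Two of your specific guesses are also off. With $g_k(t)=z\,q_k\e^{-q_kt}\prod_{i\ne k}F(q_it)$, the scaling identity is $\sum_i q_i\partial_{q_i}g_k=g_k+t\partial_tg_k=\partial_t(tg_k)$; the prefactor $q_k$ contributes the extra $g_k$. More importantly, the kernel $G$ is neither your hazard-type ratio nor the CDF ratio $\Pbb(\tau\le x)/\Pbb(E\le x)$. Monotonicity of the latter is reversed-hazard order, not likelihood-ratio order. The winning coupon enters through its \emph{density} $q_k\e^{-q_kt}$, so what actually appears is the density ratio $\rho(x)=F'(x)/\e^{-x}=z+(1-z)x^{j-1}/(j-1)!$.

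The computation that closes the gap is short. Put $W_{kl}(t)=\prod_{i\ne k,l}F(q_it)$. Then
\[
\sum_l\partial_{q_l}g_k=\Bigl(\frac1{q_k}-t\Bigr)g_k+z\,q_k\e^{-q_kt}\sum_{l\ne k}tF'(q_lt)W_{kl}(t),
\]
while
\[
\frac{\partial_t(tg_k)}{q_k}=\Bigl(\frac1{q_k}-t\Bigr)g_k+z\,\e^{-q_kt}\sum_{l\ne k}q_l\,tF'(q_lt)W_{kl}(t).
\]
Since $\int_0^\infty\partial_t(tg_k)\dd t=0$, subtract the second display from the first for each $k$ and symmetrize over unordered pairs. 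This gives
\[
\sum_i\partial_i\Phi(q)=-z\sum_{k<l}(q_k-q_l)\int_0^\infty t\,\e^{-(q_k+q_l)t}W_{kl}(t)\bigl(\rho(q_kt)-\rho(q_lt)\bigr)\dd t,
\]
and therefore, at $p=p(\theta)$,
\[
\frac{d}{d\theta}\E_{p(\theta)}z^{U_j^N}=\frac{z(1-z)}{N\theta\,(j-1)!}\sum_{k<l}(p_k-p_l)\bigl(p_k^{j-1}-p_l^{j-1}\bigr)\int_0^\infty t^{j}\e^{-(p_k+p_l)t}\prod_{i\ne k,l}F(p_it)\dd t .
\]
For $0<z<1$, $j\ge2$ and $p(\theta)\ne u$ this is strictly positive, which completes your argument. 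Monotonicity of $\rho$ is immediate, so no mixture-preservation step is needed.

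With this step filled in, your proof is genuinely different from the paper's. The paper never differentiates. It conditions on the Gamma-mixture values $Y_i$ and uses that the posterior $\pi_z(y)$ of the exponential mark decreases in $y$. A single-crossing argument on $p_a(\theta)/p_b(\theta)$ then shows that the selected value $Y_{K_\theta}$ is pathwise nonincreasing, and strictness comes from a positive-probability switching event. That coupling is transparent, but it needs $0<z<1$, where the marks form a genuine mixture.

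Your completed route instead gives an explicit positive-kernel derivative whose derivation never uses $z<1$. Since $F>0$ for every $z>0$, the prefactor $z(1-z)$ also yields the $z>1$ part of Theorem~\ref{thm:main}. Dividing by $z$ and expanding $\prod F$ in powers of $z-1$ gives Theorem~\ref{thm:factorial} directly. Finally, writing $F=\Pbb(\mathrm{Pois}(x)\ge j)+z\,\Pbb(1\le\mathrm{Pois}(x)\le j-1)$, as in your Step~1, shows that $-\partial_\theta\E z^{U_j^N}/(z-1)$ is coefficientwise nonnegative in $z$. That is exactly the criterion for radial stochastic order named in Remark~\ref{rem:scope}.
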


\begin{proof}
Fix $0<z<1$ and put $r=1-z$. Also write
\[
        B_j(x):=1+A_j(x)=\sum_{\ell=0}^{j-1}\frac{x^\ell}{\ell!}.
\]
Let $F_m$ denote the distribution function of a $\Gamma(m,1)$ random variable. Then
\[
        F_1(x)=1-\e^{-x},
        \qquad
        F_j(x)=1-\e^{-x}B_j(x).
\]
For the non-final factor in Proposition~\ref{prop:pgf-identity},
\[
\begin{aligned}
        1-\e^{-x}+(z-1)\e^{-x}A_j(x)
        &=1-\e^{-x}(1+rA_j(x)) \\
        &=zF_1(x)+rF_j(x).
\end{aligned}
\]
Define iid marked variables $(M_i,Y_i)$ as follows:
\[
        \Pbb(M_i=1)=z,
        \qquad
        \Pbb(M_i=j)=1-z,
\]
and, conditionally on $M_i=m$, let $Y_i$ have the $\Gamma(m,1)$ law. For the radial probabilities $p_i(\theta)$, set
\[
        T_i(\theta)=\frac{Y_i}{p_i(\theta)},
        \qquad
        K_\theta=\arg\max_{1\le i\le N}T_i(\theta),
\]
with any fixed rule for resolving ties. Ties have probability zero for each fixed $\theta$.

For a fixed $k$, the joint contribution of $M_k=1$ and $T_k(\theta)\in\dd t$ has density $z p_k(\theta)\e^{-p_k(\theta)t}\,\dd t$. Conditional on this value of $T_k(\theta)$, the event that $k$ is the maximizing index requires $T_i(\theta)\le t$ for every $i\ne k$. Hence
\[
\begin{aligned}
        \Pbb(M_{K_\theta}=1)
        &=z\sum_{k=1}^N\int_0^\infty
        p_k(\theta)\e^{-p_k(\theta)t}
        \prod_{i\ne k}\bigl(zF_1(p_i(\theta)t)+rF_j(p_i(\theta)t)\bigr)\,\dd t \\
        &=z\sum_{k=1}^N\int_0^\infty
        p_k(\theta)\e^{-p_k(\theta)t}
        \prod_{i\ne k}
        \left(1-\e^{-p_i(\theta)t}+(z-1)\e^{-p_i(\theta)t}A_j(p_i(\theta)t)\right)\,\dd t.
\end{aligned}
\]
By Proposition~\ref{prop:pgf-identity}, the last expression is $\E_{p(\theta)}z^{U_j^N}$. Thus
\begin{equation}\label{eq:laplace-race-representation}
        \E_{p(\theta)}z^{U_j^N}=\Pbb(M_{K_\theta}=1).
\end{equation}

Next condition on $Y_1,\ldots,Y_N$. The posterior probability that a value $Y=y$ came from the $\Gamma(1,1)$ component is
\[
        \pi_z(y):=\Pbb(M=1\mid Y=y)
        =\frac{z\e^{-y}}{z\e^{-y}+(1-z)\e^{-y}y^{j-1}/(j-1)!}
        =\frac{z}{z+(1-z)y^{j-1}/(j-1)!}.
\]
Since $j\ge2$, the function $\pi_z$ is strictly decreasing on $(0,\infty)$. Moreover, because $K_\theta$ is determined by $Y_1,\ldots,Y_N$,
\[
        \Pbb(M_{K_\theta}=1\mid Y_1,\ldots,Y_N)=\pi_z(Y_{K_\theta}).
\]

We now show that the selected value $Y_{K_\theta}$ is nonincreasing in $\theta$. Fix $0\le\theta_1<\theta_2\le1$, and work off the probability-zero event of endpoint ties. Suppose, for contradiction, that
\[
        Y_{K_{\theta_2}}>Y_{K_{\theta_1}}.
\]
Put $a=K_{\theta_1}$ and $b=K_{\theta_2}$. Then $Y_b>Y_a$. For the pair $a,b$,
\[
        \frac{T_b(\theta)}{T_a(\theta)}
        =\frac{Y_b}{Y_a}\frac{p_a(\theta)}{p_b(\theta)}.
\]
Because $p_i(\theta)=1/N+\theta h_i$,
\[
        \frac{\dd}{\dd\theta}\log\frac{p_a(\theta)}{p_b(\theta)}
        =\frac{h_a}{p_a(\theta)}-\frac{h_b}{p_b(\theta)}
        =\frac{h_a-h_b}{N p_a(\theta)p_b(\theta)},
\]
so the pairwise score ratio $T_b(\theta)/T_a(\theta)$ is monotone in $\theta$. At $\theta=0$, all probabilities are equal, so $Y_b>Y_a$ implies $T_b(0)>T_a(0)$. At $\theta_1$, however, $a$ wins, so $T_b(\theta_1)<T_a(\theta_1)$; at $\theta_2$, $b$ wins, so $T_b(\theta_2)>T_a(\theta_2)$. This would force the monotone pairwise ratio to cross the level one twice, a contradiction. Therefore
\[
        Y_{K_{\theta_2}}\le Y_{K_{\theta_1}}
\]
almost surely. Since $\pi_z$ is decreasing,
\[
        \pi_z(Y_{K_{\theta_2}})\ge \pi_z(Y_{K_{\theta_1}}).
\]
Taking expectations and using \eqref{eq:laplace-race-representation} proves monotonicity of $\theta\mapsto\E_{p(\theta)}z^{U_j^N}$.

It remains to prove strictness. Since $p\ne u$, choose $a,b$ with $h_a>h_b$. Then
\[
        \rho(\theta):=\frac{p_a(\theta)}{p_b(\theta)}
\]
is strictly increasing in $\theta$. Fix $0\le\theta_1<\theta_2\le1$ and choose
\[
        c\in\bigl(\rho(\theta_1),\rho(\theta_2)\bigr),
        \qquad c>1.
\]
Choose positive numbers $y_b$ and $y_a=cy_b$. Since the inequalities defining $c$ are strict, there are open intervals $I_a\ni y_a$ and $I_b\ni y_b$ such that
\[
        \frac{Y_a}{Y_b}>\rho(\theta_1),
        \qquad
        \frac{Y_a}{Y_b}<\rho(\theta_2)
\]
whenever $Y_a\in I_a$ and $Y_b\in I_b$. Thus, among the pair $a,b$, the winner is $a$ at $\theta_1$ and $b$ at $\theta_2$. By shrinking $I_a$ and $I_b$ if necessary, the winning pair scores at the two endpoints are bounded below by a positive constant. Hence we may impose sufficiently small upper bounds on all $Y_i$ with $i\notin\{a,b\}$ so that no other index wins at either endpoint. The joint law of $(Y_1,\ldots,Y_N)$ has a strictly positive density on $(0,\infty)^N$, so this event has positive probability. On it, $a$ wins at $\theta_1$, $b$ wins at $\theta_2$, and, since $c>1$,
\[
        Y_{K_{\theta_2}}<Y_{K_{\theta_1}}.
\]
Since $\pi_z$ is strictly decreasing, the conditional posterior probability $\pi_z(Y_{K_\theta})$ strictly increases on this event and is nondecreasing everywhere. Hence
\[
        \E_{p(\theta_2)}z^{U_j^N}>\E_{p(\theta_1)}z^{U_j^N}.
\]
The equivalence with the Laplace-transform statement follows by setting $z=\e^{-s}$.
\end{proof}

Expanding in powers of $q=z-1$ gives
\[
        \frac{\E_p z^{U_j^N}}{z}
        =\sum_{m=0}^{N-1}q^m C_m^{(j)}(p),
        \qquad C_m^{(j)}(p)=\E_p\binom{U_j^N-1}{m}.
\]
The coefficient formula is immediate from Proposition~\ref{prop:pgf-identity}.

\begin{proposition}[Marked coefficient formula]\label{prop:coeff-formula}
For $0\le m\le N-1$,
\begin{equation}\label{eq:coeff-formula}
\begin{aligned}
C_m^{(j)}(p)
        =&\sum_{k=1}^N\int_0^\infty
        p_k\e^{-p_k t}
        \sum_{\substack{S\subseteq[N]\setminus\{k\}\\ |S|=m}}
        \prod_{i\in S}\e^{-p_i t}A_j(p_i t) \\
        &\hspace{1.3in}\times
        \prod_{\ell\notin S\cup\{k\}}(1-\e^{-p_\ell t})\,\dd t.
\end{aligned}
\end{equation}
\end{proposition}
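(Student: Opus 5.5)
We obtain \eqref{eq:coeff-formula} by expanding the identity \eqref{eq:pgf-identity} of Proposition~\ref{prop:pgf-identity} in powers of $q=z-1$ and matching it against the defining expansion of $\E_p z^{U_j^N}/z$. First we record the left-hand side. Since the final coupon is always empty in the $j$th album, $U_j^N\ge1$ almost surely. Writing $z^{U_j^N-1}=(1+q)^{U_j^N-1}$ and using the binomial theorem (a finite sum, because $U_j^N-1\le N-1$) gives
\[
        \frac{\E_p z^{U_j^N}}{z}=\sum_{m=0}^{N-1}q^m\,\E_p\binom{U_j^N-1}{m}
        =\sum_{m=0}^{N-1}q^m C_m^{(j)}(p).
\]
This is a polynomial identity in $q$ valid for all real $q$.

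Next we expand the right-hand side. Divide \eqref{eq:pgf-identity} by $z$ (for $z>0$). For each fixed $k$ and $t$, each factor $i\ne k$ is the affine function $a_i+q\,b_i$ of $q$, with
\[
        a_i=1-\e^{-p_it},\qquad b_i=\e^{-p_it}A_j(p_it).
\]
Expanding the product over $i\ne k$ gives
\[
        \prod_{i\ne k}(a_i+qb_i)=\sum_{S\subseteq[N]\setminus\{k\}}q^{|S|}\prod_{i\in S}b_i\prod_{\ell\notin S\cup\{k\}}a_\ell .
\]
Grouping the terms by $|S|=m$ shows that the coefficient of $q^m$ in the integrand is exactly the integrand of \eqref{eq:coeff-formula}.

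The only point needing care is exchanging the finite sum in $q$ with the integral over $t$. Each coefficient integrand is nonnegative and bounded by $p_k\e^{-p_kt}\prod_{i\in S}b_i$. Each $b_i$ is a polynomial in $t$ times $\e^{-p_it}$, so the bound is integrable on $(0,\infty)$ for every $m$. Hence each coefficient integral is finite, and the right-hand side equals $\sum_m q^m I_m$, where $I_m$ denotes the right-hand side of \eqref{eq:coeff-formula}. This holds for all $z>0$, that is, for all $q>-1$.

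Finally, two polynomials of degree at most $N-1$ in $q$ that agree on the interval $(-1,\infty)$ have equal coefficients. Therefore $C_m^{(j)}(p)=I_m$ for every $0\le m\le N-1$. There is no genuine obstacle here; the integrability justification above is the only step that is not purely algebraic.
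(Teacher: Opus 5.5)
Your proposal is correct and follows essentially the same route as the paper: expand the Poissonized identity of Proposition~\ref{prop:pgf-identity} in powers of $q=z-1$ and match it against $\E_p z^{U_j^N}/z=\sum_{m}q^m C_m^{(j)}(p)$, which the paper simply declares immediate. You also supply the routine justifications the paper leaves implicit: interchanging the finite sum with the integral using integrable nonnegative bounds, and identifying coefficients of two polynomials that agree on $(-1,\infty)$.
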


\section{Normalized subset expansion}\label{sec:subset}

We next convert the positive coefficient formula into an alternating normalized subset formula. This is the form in which radial differentiation is finite-dimensional.

For a nonempty set $B\subseteq[N]$, write
\[
        p_B=\sum_{i\in B}p_i,
        \qquad \alpha_i^B=\frac{p_i}{p_B}\quad (i\in B).
\]
For a finite probability vector $\alpha=(\alpha_i)_{i\in B}$ and $0\le m\le |B|-1$, define
\begin{equation}\label{eq:Phi-def}
        \Phi_{m,j}(\alpha)
        :=\sum_{\substack{S\subseteq B\\ |S|=m}}
        (1-\alpha_S)
        \int_0^\infty \e^{-x}
        \prod_{i\in S}A_j(\alpha_i x)\,\dd x,
        \qquad \alpha_S:=\sum_{i\in S}\alpha_i.
\end{equation}
For $m=0$, the empty product is one and $\Phi_{0,j}(\alpha)=1$.

\begin{proposition}[Normalized alternating expansion]\label{prop:subset-expansion}
For $0\le m\le N-1$,
\begin{equation}\label{eq:subset-expansion}
        C_m^{(j)}(p)
        =\sum_{\substack{B\subseteq[N]\\ |B|\ge m+1}}
        (-1)^{|B|-m-1}
        \Phi_{m,j}\left(\alpha^B\right).
\end{equation}
\end{proposition}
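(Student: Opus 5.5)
Starting from the marked coefficient formula, Proposition~\ref{prop:coeff-formula}, the plan is to expand each factor $1-\e^{-p_\ell t}$ for $\ell\notin S\cup\{k\}$ by inclusion--exclusion:
\[
        \prod_{\ell\notin S\cup\{k\}}(1-\e^{-p_\ell t})
        =\sum_{R\subseteq [N]\setminus(S\cup\{k\})}(-1)^{|R|}\e^{-p_R t}.
\]
Each term then carries the exponential weight $\e^{-p_k t}\e^{-p_S t}\e^{-p_R t}=\e^{-p_B t}$ with $B:=S\cup R\cup\{k\}$. For a fixed $B$ we have $|B|\ge m+1$ and $|R|=|B|-m-1$, which gives the sign $(-1)^{|B|-m-1}$. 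After exchanging the finite sums with the integral (absolute convergence is clear, since every term is bounded by a polynomial times $\e^{-p_k t}$), the coefficient becomes
\[
        C_m^{(j)}(p)=\sum_{|B|\ge m+1}(-1)^{|B|-m-1}
        \sum_{\substack{S\subseteq B\\|S|=m}}\ \sum_{k\in B\setminus S}
        \int_0^\infty p_k\,\e^{-p_B t}\prod_{i\in S}A_j(p_i t)\dd t .
\]

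Next, for fixed $B$ and $S$, I sum over $k\in B\setminus S$. This produces the factor $p_B-p_S=p_B(1-\alpha^B_S)$. Substituting $x=p_B t$, so that $\dd t=\dd x/p_B$ and $p_i t=\alpha_i^B x$, turns the inner integral into
\[
        (1-\alpha^B_S)\int_0^\infty \e^{-x}\prod_{i\in S}A_j(\alpha^B_i x)\dd x .
\]
Summing over $S\subseteq B$ with $|S|=m$ gives exactly $\Phi_{m,j}(\alpha^B)$ as defined in \eqref{eq:Phi-def}, and this yields \eqref{eq:subset-expansion}.

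The only step requiring care is the bookkeeping of the reindexing. The triple $(k,S,R)$ must correspond bijectively to the data ($B$, $S\subseteq B$ with $|S|=m$, $k\in B\setminus S$), with $R$ then determined as $B\setminus(S\cup\{k\})$. One must also check that the sum over $k$ collapses correctly to $1-\alpha_S^B$ even when $S$ is empty. In that case $m=0$, and we get $\Phi_{0,j}=1$, consistent with $C_0=1=\sum_{B\ne\emptyset}(-1)^{|B|-1}$.
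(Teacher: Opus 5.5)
Your proof is correct and follows the paper's argument essentially verbatim. The shared steps are the inclusion--exclusion expansion of $\prod_{\ell\notin S\cup\{k\}}(1-\e^{-p_\ell t})$, regrouping by $B=S\cup\{k\}\cup R$ with sign $(-1)^{|R|}=(-1)^{|B|-m-1}$, the bijection between triples $(k,S,R)$ and the data ($S\subseteq B$, $k\in B\setminus S$), and the substitution $x=p_Bt$ combined with $\sum_{k\in B\setminus S}p_k/p_B=1-\alpha^B_S$. The only differences are cosmetic: you sum over $k$ before rescaling while the paper rescales first, and your $m=0$ consistency check is an extra.
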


\begin{proof}
Expand
\[
        \prod_{\ell\notin S\cup\{k\}}(1-\e^{-p_\ell t})
        =\sum_{R\subseteq[N]\setminus(S\cup\{k\})}(-1)^{|R|}\e^{-p_Rt}.
\]
For fixed $S,k,R$, put $B=S\cup\{k\}\cup R$. Then $|B|-m-1=|R|$. The exponential rate in the integral is $p_B$, and after the change of variables $x=p_Bt$ the corresponding contribution becomes
\[
        \frac{p_k}{p_B}
        \int_0^\infty \e^{-x}
        \prod_{i\in S}A_j\left(\frac{p_i}{p_B}x\right)\dd x.
\]
Conversely, for fixed $B$ and $S\subseteq B$ with $|S|=m$, the original triples are recovered uniquely by choosing the final coupon $k\in B\setminus S$ and setting $R=B\setminus(S\cup\{k\})$. Hence, for fixed $B$ and $S$, summing over final coupons $k\in B\setminus S$ gives
\[
        \sum_{k\in B\setminus S}\frac{p_k}{p_B}=1-\sum_{i\in S}\frac{p_i}{p_B}=1-\alpha_S^B.
\]
This gives \eqref{eq:subset-expansion}.
\end{proof}

\begin{remark}[The oldest sibling]
When $j=2$, $A_2(x)=x$. Therefore
\[
        \Phi_{m,2}(\alpha)=(m+1)!e_{m+1}(\alpha),
\]
where $e_r$ is the elementary symmetric polynomial of degree $r$. Thus Proposition~\ref{prop:subset-expansion} recovers the clean elementary-symmetric formulas for the oldest sibling.
\end{remark}

\paragraph{Proof architecture.}
The remaining proof has two layers. Section~\ref{sec:local} proves a local algebraic fact about a normalized subset: the cumulative truncated Poisson polynomial has a coefficientwise nonnegative pair-dissipation kernel. This is the only place where the cumulative form of $A_j$ is used in an essential way. Section~\ref{sec:proof-main} then inserts that local identity into the alternating expansion above. The alternating signs are converted into positive Laplace kernels, yielding a radial derivative equal to a negative sum of squared pair differences.

\section{The local cumulative-polynomial dissipation lemma}\label{sec:local}

The next result is the algebraic core. It says that, inside every normalized subset, the cumulative polynomial functional dissipates in pairwise-square form. Exact-count pieces do not have this property in general; the cumulative polynomial $A_j$ is essential.

Put
\[
        J=j-1.
\]
For a finite index set $I$ and variables $w_i$, define
\begin{equation}\label{eq:RJI}
        R_{J,I}(w)
        :=\sum_{\nu\in\{1,\ldots,J\}^{I}}
        \frac{|\nu|!}{\prod_{i\in I}\nu_i!}
        \prod_{i\in I}w_i^{\nu_i},
\end{equation}
with the convention $R_{J,\varnothing}=1$. This polynomial satisfies
\begin{equation}\label{eq:R-integral}
        R_{J,I}(w)=\int_0^\infty \e^{-s}\prod_{i\in I}A_j(w_is)\,\dd s.
\end{equation}

For a finite tuple $\gamma=(\gamma_i)_{i\in I}$ of nonnegative integers, write
\[
        M(\gamma):=\frac{|\gamma|!}{\prod_{i\in I}\gamma_i!}.
\]
If the tuple is displayed explicitly, for example $M(a,b,\eta)$, the same definition is used. We also write
\[
        \cB_J(I):=\{1,\ldots,J\}^{I}.
\]

Fix a set $T$ and two additional variables $x,y$. Write $z_T=\sum_{r\in T}z_r$. Define
\begin{equation}\label{eq:FJT}
\begin{aligned}
        F_{J,T}(x,y;z)
        :=&(1-z_T-x)R_{J,\{x\}\cup T}(x,z)
          +(1-z_T-y)R_{J,\{y\}\cup T}(y,z)  \\
        &\quad +\sum_{q\in T}z_q R_{J,\{x,y\}\cup(T\setminus\{q\})}(x,y,z_{T\setminus\{q\}}).
\end{aligned}
\end{equation}
Finally set
\begin{equation}\label{eq:QJT}
        Q_{J,T}(x,y;z)
        :=-\frac{(\partial_x-\partial_y)F_{J,T}(x,y;z)}{x-y}.
\end{equation}
The numerator is divisible by $x-y$, since $F_{J,T}$ is symmetric in $x,y$.

The proof of coefficientwise positivity is divided into two elementary coefficient lemmas. The first concerns a genuinely two-marked box-truncated multinomial polynomial.

\begin{lemma}[Two-marked coefficient formula]\label{lem:two-marked}
Let $U$ be a finite set and put
\[
        P_{J,U}(x,y;w)
        :=-\frac{(\partial_x-\partial_y)R_{J,\{x,y\}\cup U}(x,y,w)}{x-y}.
\]
Then $P_{J,U}$ has nonnegative coefficients. More precisely, if $\mu\in\cB_J(U)$ and $r,s\ge0$, and if
\[
        h=r+s+2,
\]
then
\begin{equation}\label{eq:two-marked-coeff}
\begin{aligned}
        \coef{x^r y^s w^\mu}{P_{J,U}}
        =&\ \ind{h\le J+1}M(h-1,1,\mu) \\
        &+\ind{r\le J-1}\ind{s\le J-1}\ind{h\ge J+2}
          (h-J)M(J,h-J,\mu).
\end{aligned}
\end{equation}
All other $w$-coefficients are zero; equivalently, if the exponent vector in the $w$-variables is not in $\cB_J(U)$, then the corresponding coefficient of $P_{J,U}$ vanishes.
\end{lemma}

\begin{proof}
Fix $\mu\in\cB_J(U)$. Write
\[
        R_\mu(x,y)=\sum_{a=1}^J\sum_{b=1}^J M(a,b,\mu)x^ay^b.
\]
The coefficient of $x^ay^b w^\mu$ in
\[
        D:=-(\partial_x-\partial_y)R_{J,\{x,y\}\cup U}
\]
is
\begin{equation}\label{eq:Dab-two}
        d_{a,b}=-(a+1)M(a+1,b,\mu)+(b+1)M(a,b+1,\mu),
\end{equation}
where a term is interpreted as zero unless every displayed marked exponent lies in $\{1,\ldots,J\}$. If
\[
        P_{J,U}(x,y;w)=\sum_{r,s\ge0}p_{r,s}(\mu)x^ry^s w^\mu+\cdots,
\]
then the identity $D=(x-y)P_{J,U}$ gives, coefficient by coefficient,
\[
        d_{a,b}=p_{a-1,b}(\mu)-p_{a,b-1}(\mu),
\]
with the convention that $p_{r,s}=0$ if either index is negative. Telescoping this recurrence in the $y$-direction gives
\begin{equation}\label{eq:hockey-two}
        \coef{x^r y^s w^\mu}{P_{J,U}}
        =p_{r,s}(\mu)=\sum_{\ell=0}^s d_{r+1+\ell,s-\ell}.
\end{equation}
Thus the desired coefficient is obtained by summing $d_{a,b}$ along the path
\[
        (a,b)=(r+1,s),(r+2,s-1),\ldots,(r+s+1,0).
\]
Along this path $a+b=r+s+1=h-1$.

At an interior point, where both terms in \eqref{eq:Dab-two} are present, the two terms cancel exactly, since
\[
        (a+1)M(a+1,b,\mu)=(b+1)M(a,b+1,\mu).
\]
Consequently only boundary points of the box $\{1,\ldots,J\}^2$ can contribute. There are three possible boundary contributions.

First, there is a lower $y$-boundary contribution at $b=0$. Then $a=h-1$, and only the second term in \eqref{eq:Dab-two} can be present. It is present exactly when $h-1\le J$, equivalently $h\le J+1$, and its value is
\[
        M(h-1,1,\mu).
\]
This gives the first term in \eqref{eq:two-marked-coeff}.

Second, there is a possible upper $x$-boundary contribution at $a=J$. Then $b=h-1-J$, and the first term in \eqref{eq:Dab-two} is absent because it would require the exponent $J+1$ in the $x$-coordinate. The surviving second term is positive and equals
\[
        (b+1)M(J,b+1,\mu)=(h-J)M(J,h-J,\mu).
\]
This contribution occurs precisely when the path reaches $a=J$ and the surviving $y$-exponent $h-J$ lies in $\{1,\ldots,J\}$. Equivalently,
\[
        r\le J-1,
        \qquad J+1\le h\le 2J.
\]
The case $h=J+1$ is already the lower $y$-boundary case $b=0$ above. Thus the genuinely upper contribution is relevant for $J+2\le h\le2J$.

Third, there is a possible upper $y$-boundary contribution at $b=J$. Then $a=h-1-J$, and the second term in \eqref{eq:Dab-two} is absent because it would require the exponent $J+1$ in the $y$-coordinate. The surviving first term is negative and equals
\[
        -(a+1)M(a+1,J,\mu)=-(h-J)M(h-J,J,\mu).
\]
This contribution occurs precisely when
\[
        s\ge J,
        \qquad J+2\le h\le2J.
\]
It has the same magnitude as the upper $x$-boundary contribution, because
\[
        M(J,h-J,\mu)=M(h-J,J,\mu).
\]
Whenever this negative upper $y$-boundary contribution is present, the positive upper $x$-boundary contribution is also present. Indeed, $s\ge J$ and $h\le2J$ imply
\[
        r=h-s-2\le 2J-J-2=J-2<J-1,
\]
so the path also reaches $a=J$. The two upper-boundary contributions therefore cancel in that case.

It remains only to record when the positive upper $x$-boundary contribution is not canceled. This is exactly the case
\[
        r\le J-1,
        \qquad s\le J-1,
        \qquad h\ge J+2.
\]
The missing upper bound $h\le2J$ is automatic from $r,s\le J-1$, since then $h=r+s+2\le2J$. The uncanceled contribution is
\[
        (h-J)M(J,h-J,\mu),
\]
which is the second term in \eqref{eq:two-marked-coeff}. These are all possible boundary contributions, so the displayed formula follows. It is coefficientwise nonnegative, and the proof is complete.
\end{proof}

The second coefficient lemma isolates the one-marked part and records exactly which lower-boundary terms must be canceled by the two-marked terms.

\begin{lemma}[One-marked coefficient formula]\label{lem:one-marked}
Let $T$ be a finite set, and define
\[
        G_{J,T}(x;z):=(1-z_T-x)R_{J,\{x\}\cup T}(x,z)
\]
and
\[
        E_{J,T}(x,y;z):=-\frac{G_{J,T}'(x;z)-G_{J,T}'(y;z)}{x-y}.
\]
Fix $r,s\ge0$, set $h=r+s+2$, and let $\eta$ be a multi-index on $T$. Then
\begin{equation}\label{eq:one-marked-coeff}
        \coef{x^r y^s z^\eta}{E_{J,T}}
        =U_h(\eta)-L_h(\eta),
\end{equation}
where $U_h(\eta)$ is the nonnegative upper-boundary contribution
\begin{equation}\label{eq:upper-Uh}
\begin{aligned}
        U_h(\eta)
        :=&\ h\,\ind{h=J+1}\ind{\eta\in\cB_J(T)}M(J,\eta) \\
        &+h\sum_{q\in T}
        \ind{h\le J}\ind{\eta_q=J+1}
        \ind{\eta_{T\setminus\{q\}}\in\cB_J(T\setminus\{q\})}
        M(h,\eta-e_q),
\end{aligned}
\end{equation}
and $L_h(\eta)$ is the lower-boundary deficit
\begin{equation}\label{eq:lower-Lh}
        L_h(\eta)
        :=\sum_{q\in T}
        \ind{h\le J}\ind{\eta_q=1}
        \ind{\eta_{T\setminus\{q\}}\in\cB_J(T\setminus\{q\})}
        M(h-1,1,\eta_{T\setminus\{q\}}).
\end{equation}
\end{lemma}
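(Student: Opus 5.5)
We mimic the proof of Lemma~\ref{lem:two-marked}: first compute the coefficients of $G'_{J,T}$, then recover those of $E_{J,T}$ by telescoping the divided difference, and finally identify which boundary terms survive.

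\textbf{Coefficients of $G_{J,T}$.} Write
\[
R_{J,\{x\}\cup T}(x,z)=\sum_{a=1}^J\sum_{\eta\in\cB_J(T)}M(a,\eta)x^a z^\eta.
\]
Multiplying by $1-z_T-x$ gives the coefficient
\[
g_{a}(\eta)=M(a,\eta)-\sum_{q\in T}M(a,\eta-e_q)-M(a-1,\eta),
\]
where a term is read as zero unless its exponents lie in the box; in particular $a-1\ge1$ and $(\eta-e_q)_q\in\{1,\dots,J\}$ are required. Since $|\eta|$ is fixed, the multinomial Pascal rule
\[
M(a,\eta)=M(a-1,\eta)+\sum_q M(a,\eta-e_q)
\]
holds in the interior, so $g_a(\eta)$ vanishes except at boundary configurations. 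These are:
\begin{itemize}[leftmargin=2em]
\item $a=1$ (the term $M(0,\eta)$ is missing);
\item $a=J+1$ (only $-M(J,\eta)$ is present);
\item some $\eta_q=1$ (the term with $\eta-e_q$ has exponent $0$ and is missing);
\item some $\eta_q=J+1$ (only the term $-M(a,\eta-e_q)$ is present).
\end{itemize}
I would record $g_a(\eta)$ explicitly as a signed sum of these boundary pieces.

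\textbf{Telescoping.} Writing $G'=\sum_a a\,g_a x^{a-1}$ and
\[
-(G'(x)-G'(y))=(x-y)E,
\]
the coefficient of $x^ry^s$ in $E$ is
\[
-\,(h)\,g_{h}(\eta)
\]
whenever the divided difference of $x^{n}$ contributes $x^ry^s$ with $r+s=n-1$. Indeed, $(x^n-y^n)/(x-y)=\sum_{r+s=n-1}x^ry^s$, and $G'$ has $x^{n}$-coefficient $(n+1)g_{n+1}$. Hence
\[
\coef{x^ry^sz^\eta}{E_{J,T}}=-h\,g_h(\eta),\qquad h=r+s+2,
\]
which depends only on $h$; this matches the form of \eqref{eq:one-marked-coeff}.

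\textbf{Identifying the surviving terms.} It remains to evaluate $-h\,g_h(\eta)$ case by case.
\begin{itemize}[leftmargin=2em]
\item Since $h\ge2$, the $a=1$ boundary never occurs.
\item The case $h=J+1$ with $\eta$ in the box gives $+hM(J,\eta)$.
\item The case $\eta_q=J+1$ with $h\le J$ gives $+hM(h,\eta-e_q)$.
\item The case $\eta_q=1$ with $h\le J$ leaves uncancelled the Pascal term $hM(h,\eta-e_q)$ carrying a zero exponent. With the sign it becomes $-hM(h,\eta_{T\setminus\{q\}})$, and this equals $-M(h-1,1,\eta_{T\setminus\{q\}})$ because $M(h-1,1,\cdot)=hM(h,\cdot)$ once the extra part of size $1$ is accounted for.
\end{itemize}
These three surviving pieces reproduce $U_h$ and $L_h$.

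\textbf{Main obstacle.} The hardest step is the bookkeeping when several boundary conditions hold at once, for instance several coordinates with $\eta_q\in\{1,J+1\}$, or $h=J+1$ together with such coordinates. One must check that the Pascal cancellation is performed term by term, so that each missing or extra term contributes independently. It must also be shown that configurations with two coordinates outside the box (or with $a$ outside the box and some $\eta_q$ outside the box) contribute nothing, since each term of $g_a$ shifts at most one coordinate. This is what the indicators $\eta_{T\setminus\{q\}}\in\cB_J(T\setminus\{q\})$ in \eqref{eq:upper-Uh} and \eqref{eq:lower-Lh} encode.
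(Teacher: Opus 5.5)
Your proposal is correct and follows the paper's proof essentially step for step. You identify the coefficient of $x^ry^sz^\eta$ in $E_{J,T}$ as $-h\,g_h(\eta)$, use the multinomial divergence (Pascal) identity to leave only the lower-boundary terms $M(h,\eta-e_q)$ with $\eta_q=1$, convert these via $hM(h,\eta-e_q)=M(h-1,1,\eta_{T\setminus\{q\}})$, and read off $U_h$ from the upper-boundary cases $h=J+1$ and $\eta_q=J+1$. The paper removes the simultaneous-boundary bookkeeping you flag as the main obstacle by one case split. If the leading term $M(h,\eta)$ is present ($h\le J$ and $\eta\in\cB_J(T)$), the Pascal cancellation applies and $U_h=0$. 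If it is absent, at most one single-coordinate-shifted term survives and $L_h=0$.
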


\begin{proof}
Let $g_h(\eta)$ be the coefficient of $x^hz^\eta$ in $G_{J,T}(x;z)$. Since
\[
        -\frac{x^{h-1}-y^{h-1}}{x-y}
        =-\sum_{r+s=h-2}x^r y^s,
\]
the coefficient of $x^ry^sz^\eta$ in $E_{J,T}$ is $-h g_h(\eta)$.

The coefficient $g_h(\eta)$ is
\begin{equation}\label{eq:gh}
\begin{aligned}
        g_h(\eta)=&\ \ind{h\le J}\ind{\eta\in\cB_J(T)}M(h,\eta) \\
        &-\ind{h-1\le J}\ind{\eta\in\cB_J(T)}M(h-1,\eta) \\
        &-\sum_{q\in T}\ind{h\le J}
          \ind{\eta_q\ge2}\ind{\eta-e_q\in\cB_J(T)}M(h,\eta-e_q),
\end{aligned}
\end{equation}
where an indicator suppresses a term whose displayed exponent vector is outside its box. We compare the three lines of \eqref{eq:gh} by the multinomial divergence identity
\begin{equation}\label{eq:divergence}
        M(\gamma)=\sum_{i:\gamma_i>0}M(\gamma-e_i),
\end{equation}
valid for every nonzero tuple $\gamma$ of nonnegative integers. Identity \eqref{eq:divergence} follows at once by dividing
\[
        \sum_i \gamma_i=|\gamma|
\]
by $\prod_i\gamma_i!$ and multiplying by $(|\gamma|-1)!$.

First suppose that $h\le J$ and $\eta\in\cB_J(T)$. Then the first line of \eqref{eq:gh} is an interior current coefficient. Applying \eqref{eq:divergence} to the tuple $(h,\eta)$ gives
\[
        M(h,\eta)=M(h-1,\eta)+\sum_{q\in T}M(h,\eta-e_q),
\]
where the summand with index $q$ is understood to be present only when $\eta_q>0$. The second line of \eqref{eq:gh} subtracts the predecessor obtained by lowering the $x$-coordinate from $h$ to $h-1$. The admissible terms in the third line subtract exactly those $T$-coordinate predecessors for which $\eta_q\ge2$. Thus the only predecessors not subtracted are the lower-boundary predecessors with $\eta_q=1$, and hence
\[
        g_h(\eta)=\sum_{q\in T}\ind{\eta_q=1}M(h,\eta-e_q).
\]
For such a coordinate $q$, the absorbed derivative factor is accounted for by the elementary identity
\begin{equation}\label{eq:absorbed-h-factor}
        hM(h,\eta-e_q)=M(h-1,1,\eta_{T\setminus\{q\}}).
\end{equation}
Therefore
\[
        -h g_h(\eta)
        =-\sum_{q\in T}\ind{\eta_q=1}
          M(h-1,1,\eta_{T\setminus\{q\}}),
\]
which is precisely the lower-boundary deficit in \eqref{eq:lower-Lh} in the present case.

It remains to consider the case in which the current coefficient in the first line of \eqref{eq:gh} is absent. Then no negative interior current has to be balanced, and the coefficient can only come from upper-boundary predecessor terms.

The term from the factor $-x$ is present precisely when $h-1\le J$ and $\eta\in\cB_J(T)$. Since the first line is absent, this forces $h>J$, and therefore $h=J+1$. Its contribution to $-h g_h(\eta)$ is
\[
        hM(J,\eta),
\]
which is the first summand of \eqref{eq:upper-Uh}.

The term from the factor $-z_q$ is present precisely when
\[
        h\le J,\qquad \eta_q\ge2,\qquad
        \eta-e_q\in\cB_J(T).
\]
Since the first line is absent and $h\le J$, the vector $\eta$ itself is not in $\cB_J(T)$. The displayed conditions then force
\[
        \eta_q=J+1,
        \qquad
        \eta_{T\setminus\{q\}}\in\cB_J(T\setminus\{q\}),
\]
and the contribution to $-h g_h(\eta)$ is
\[
        hM(h,\eta-e_q).
\]
Summing over $q\in T$ gives exactly the second summand of \eqref{eq:upper-Uh}. In all remaining cases the three lines of \eqref{eq:gh} are absent, and both sides of \eqref{eq:one-marked-coeff} vanish. This proves \eqref{eq:one-marked-coeff}.
\end{proof}

\begin{lemma}[Local cumulative-polynomial dissipation]\label{lem:local}
For every $J\ge1$ and every finite set $T$, the polynomial $Q_{J,T}$ has nonnegative coefficients. Moreover, $Q_{J,T}$ is not the zero polynomial.
\end{lemma}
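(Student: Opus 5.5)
The plan is to decompose $Q_{J,T}$ into the one-marked piece handled by Lemma~\ref{lem:one-marked} and a sum of two-marked pieces handled by Lemma~\ref{lem:two-marked}, and then compare coefficients. Write $G_{J,T}$ as in Lemma~\ref{lem:one-marked}. The first two summands of \eqref{eq:FJT} are $G_{J,T}(x;z)+G_{J,T}(y;z)$. Applying $-(\partial_x-\partial_y)/(x-y)$ to this sum gives exactly $E_{J,T}(x,y;z)$.

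For the third summand, each term $z_qR_{J,\{x,y\}\cup(T\setminus\{q\})}$ carries the factor $z_q$, which does not depend on $x,y$. So the operator passes through it and gives $z_q\,P_{J,T\setminus\{q\}}(x,y;z_{T\setminus\{q\}})$. Thus
\[
Q_{J,T}=E_{J,T}+\sum_{q\in T}z_q\,P_{J,T\setminus\{q\}}(x,y;z_{T\setminus\{q\}}).
\]

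Next I would extract the coefficient of $x^ry^sz^\eta$, with $h=r+s+2$. The polynomial $P_{J,T\setminus\{q\}}$ does not involve $z_q$. Hence the $q$-th summand contributes only when $\eta_q=1$. By Lemma~\ref{lem:two-marked} it also needs $\mu:=\eta_{T\setminus\{q\}}\in\cB_J(T\setminus\{q\})$, and its contribution is
\[
\ind{h\le J+1}M(h-1,1,\mu)+\ind{r,s\le J-1}\ind{h\ge J+2}(h-J)M(J,h-J,\mu)\ \ge0.
\]
By Lemma~\ref{lem:one-marked}, the one-marked coefficient is $U_h(\eta)-L_h(\eta)$ with $U_h\ge0$.

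The key step is the cancellation of the lower-boundary deficit $L_h(\eta)$. Each of its summands requires $h\le J$, $\eta_q=1$ and $\mu\in\cB_J(T\setminus\{q\})$, and equals $M(h-1,1,\mu)$. These are exactly the conditions under which the first term of the $q$-th two-marked contribution is present, since $h\le J$ implies $h\le J+1$. So the two cancel term by term in $q$. What remains is
\[
U_h(\eta)+\sum_{q}\ind{\eta_q=1}\ind{\mu\in\cB_J}\Bigl[\ind{h=J+1}M(J,1,\mu)+(\text{second two-marked term})\Bigr],
\]
which is a sum of nonnegative terms. This matching of the index sets of $L_h$ against the lower-boundary term of \eqref{eq:two-marked-coeff} is the main point to check carefully, including the edge cases $T=\varnothing$ and $h=J+1$. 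Everything else is bookkeeping already done in the two lemmas.

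For non-vanishing, I would exhibit a positive coefficient. Take $r=J-1$, $s=0$, so that $h=J+1$, and take $\eta=(1,\dots,1)\in\cB_J(T)$; when $T=\varnothing$ this is the empty tuple. The first summand of $U_h$ then equals $(J+1)M(J,\eta)>0$. All the other surviving terms are nonnegative, so this coefficient of $Q_{J,T}$ is strictly positive, and $Q_{J,T}\ne0$.
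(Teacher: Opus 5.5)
Your proposal is correct and follows the paper's proof essentially step for step: the same decomposition $Q_{J,T}=E_{J,T}+\sum_{q\in T}z_qP_{J,T\setminus\{q\}}$, and the same termwise cancellation of $L_h(\eta)$ against the first two-marked term when $h\le J$, leaving only nonnegative terms. For nonvanishing you use the same positive $U_{J+1}$ coefficient with $\eta=(1,\ldots,1)$; your choice $r=J-1$, $s=0$ simply covers the case $T=\varnothing$ uniformly, where the paper writes out $Q_{J,\varnothing}$ explicitly.
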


\begin{proof}
Using the notation of Lemma~\ref{lem:one-marked}, the definition \eqref{eq:FJT} gives the exact decomposition
\begin{equation}\label{eq:Q-decomp}
        Q_{J,T}(x,y;z)
        =E_{J,T}(x,y;z)
        +\sum_{q\in T}z_q P_{J,T\setminus\{q\}}(x,y;z_{T\setminus\{q\}}).
\end{equation}
Fix a coefficient $x^ry^sz^\eta$ and put $h=r+s+2$. By Lemma~\ref{lem:one-marked}, the one-marked part contributes $U_h(\eta)-L_h(\eta)$, where $U_h(\eta)\ge0$.

Now consider a summand $z_qP_{J,T\setminus\{q\}}$. It can contribute to $z^\eta$ only if $\eta_q=1$ and $\eta_{T\setminus\{q\}}\in\cB_J(T\setminus\{q\})$. In that case Lemma~\ref{lem:two-marked} gives the contribution
\begin{equation}\label{eq:P-contribution-local}
\begin{aligned}
        &\ind{h\le J+1}M(h-1,1,\eta_{T\setminus\{q\}}) \\
        &\quad +\ind{r\le J-1}\ind{s\le J-1}\ind{h\ge J+2}
        (h-J)M(J,h-J,\eta_{T\setminus\{q\}}).
\end{aligned}
\end{equation}
The first term in \eqref{eq:P-contribution-local} cancels exactly the corresponding summand of $L_h(\eta)$ when $h\le J$. More explicitly, for each $q\in T$ the lower-boundary summand to be canceled is
\begin{equation}\label{eq:local-cancellation-indicators}
        \ind{h\le J}\ind{\eta_q=1}
        \ind{\eta_{T\setminus\{q\}}\in\cB_J(T\setminus\{q\})}
        M(h-1,1,\eta_{T\setminus\{q\}}),
\end{equation}
and the first term in \eqref{eq:P-contribution-local}, together with the same conditions $\eta_q=1$ and $\eta_{T\setminus\{q\}}\in\cB_J(T\setminus\{q\})$, supplies precisely this quantity whenever $h\le J$. If $h=J+1$, the same first term remains as a nonnegative contribution because no lower-boundary deficit is present; if $h>J+1$, it is absent. The second term in \eqref{eq:P-contribution-local} is always nonnegative. Thus, after the lower-boundary cancellations, every remaining summand in the coefficient of $x^ry^sz^\eta$ in $Q_{J,T}$ is nonnegative. This proves coefficientwise nonnegativity.

Nontriviality is immediate. If $T=\varnothing$, then
\[
        Q_{J,\varnothing}(x,y)=(J+1)\sum_{r=0}^{J-1}x^{J-1-r}y^r.
\]
If $T\ne\varnothing$, the coefficient of $x^r y^s\prod_{q\in T}z_q$ with $r+s=J-1$ receives the positive upper-boundary contribution $(J+1)M(J,1,\ldots,1)$ from $U_{J+1}$, and hence $Q_{J,T}\ne0$.
\end{proof}

\begin{remark}
For $T=\varnothing$, Lemma~\ref{lem:local} gives
\[
        Q_{J,\varnothing}(x,y)=(J+1)\sum_{r=0}^{J-1}x^{J-1-r}y^r.
\]
This is exactly the $N=2$ local dissipation behind $1-p^j-(1-p)^j$. For larger $T$, the terms $z_qR_{J,\{x,y\}\cup(T\setminus\{q\})}$ in \eqref{eq:FJT} are precisely what cancel the lower-boundary deficits of the one-marked terms. This is the algebraic form of cumulative cancellation.
\end{remark}

\section{Radial derivative and positive kernels}\label{sec:proof-main}

Let
\[
        p_i(\theta)=\frac1N+\theta h_i,
        \qquad \sum_{i=1}^Nh_i=0,
        \qquad 0<\theta\le1.
\]
For a fixed subset $B$, put
\[
        \alpha_i=\frac{p_i(\theta)}{p_B(\theta)},
        \qquad p_B(\theta)=\sum_{i\in B}p_i(\theta).
\]
Then
\begin{equation}\label{eq:alpha-prime}
        \alpha_i'
        =\frac{|B|\alpha_i-1}{N\theta p_B}.
\end{equation}
Indeed, if $H_B=\sum_{i\in B}h_i$, then $p_B=|B|/N+\theta H_B$, and
\[
        \alpha_i'
        =\frac{h_i p_B-p_iH_B}{p_B^2}
        =\frac{Np_Bh_i-Np_iH_B}{N p_B^2}
        =\frac{|B|p_i-p_B}{N\theta p_B^2}
        =\frac{|B|\alpha_i-1}{N\theta p_B}.
\]
We shall use the following pair-polarization identity. For every differentiable function $\Phi$ on the simplex over $B$,
\begin{equation}\label{eq:pair-identity}
        \sum_{i\in B}\partial_i\Phi(\alpha)(|B|\alpha_i-1)
        =\sum_{a<b\in B}(\alpha_a-\alpha_b)(\partial_a\Phi-\partial_b\Phi).
\end{equation}
It follows from the elementary identity
\begin{equation}\label{eq:pair-polarization}
        \sum_{a<b}(x_a-x_b)(y_a-y_b)
        =|B|\sum_i x_iy_i-\left(\sum_i x_i\right)\left(\sum_i y_i\right),
\end{equation}
with $x_i=\alpha_i$ and $y_i=\partial_i\Phi(\alpha)$, using $\sum_i\alpha_i=1$.

We next make explicit how the local polynomial $F_{J,T}$ arises from the pair derivative of $\Phi_{m,j}$.

\begin{lemma}[Pair decomposition of the normalized derivative]\label{lem:pair-decomp}
Let $m\ge1$, let $B$ be finite with $|B|\ge m+1$, and let $a,b\in B$ be distinct. For $\alpha$ in the simplex over $B$,
\begin{equation}\label{eq:local-pair-diss}
        -\frac{\partial_a\Phi_{m,j}(\alpha)-\partial_b\Phi_{m,j}(\alpha)}{\alpha_a-\alpha_b}
        =\sum_{\substack{T\subseteq B\setminus\{a,b\}\\ |T|=m-1}}
        Q_{J,T}(\alpha_a,\alpha_b;\alpha_T).
\end{equation}
The quotient is interpreted by polynomial continuation when $\alpha_a=\alpha_b$.
\end{lemma}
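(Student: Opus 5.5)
The plan is to compute the directional derivative $(\partial_a-\partial_b)\Phi_{m,j}$ directly from the definition \eqref{eq:Phi-def}, after rewriting $\Phi_{m,j}$ with \eqref{eq:R-integral} as
\[
        \Phi_{m,j}(\alpha)=\sum_{\substack{S\subseteq B\\|S|=m}}(1-\alpha_S)\,R_{J,S}(\alpha_S),
\]
a polynomial in the $\alpha_i$. The direction $e_a-e_b$ is tangent to the simplex and leaves every $\alpha_q$ with $q\notin\{a,b\}$ fixed. So $(\partial_a-\partial_b)\Phi_{m,j}$ is intrinsic to the simplex. In particular, we may rewrite any coefficient using $\sum_{i\in B}\alpha_i=1$, provided the rewritten quantity agrees with the original along the whole line $\alpha+\tau(e_a-e_b)$.

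I would split the subsets $S$ according to $S\cap\{a,b\}$.
\begin{enumerate}[label=(\roman*)]
\item If $S\cap\{a,b\}=\varnothing$, the summand does not involve $\alpha_a$ or $\alpha_b$, so it is annihilated.
\item If $S=\{a\}\cup T$ with $T\subseteq B\setminus\{a,b\}$ and $|T|=m-1$, the summand is exactly $G_{J,T}(\alpha_a;\alpha_T)$ in the notation of Lemma~\ref{lem:one-marked}. It contributes $G_{J,T}'(\alpha_a;\alpha_T)$ to the derivative.
\item Symmetrically, $S=\{b\}\cup T$ contributes $-G_{J,T}'(\alpha_b;\alpha_T)$. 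These two cases reproduce the first two lines of \eqref{eq:FJT} with $x=\alpha_a$, $y=\alpha_b$, $z=\alpha_T$.
\item If $S=\{a,b\}\cup T'$ with $|T'|=m-2$ (this case is void when $m=1$), the coefficient is $1-\alpha_a-\alpha_b-\alpha_{T'}$. On the simplex, and along the whole $(e_a-e_b)$-line, this equals $\sum_{q\in B\setminus(\{a,b\}\cup T')}\alpha_q$. Substituting this and reindexing by $T=T'\cup\{q\}$ is a bijection between pairs $(T',q)$ and pairs $(T,q)$ with $|T|=m-1$, $T\subseteq B\setminus\{a,b\}$, $q\in T$. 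The result is
\[
        \sum_{T}\sum_{q\in T}\alpha_q\,R_{J,\{a,b\}\cup(T\setminus\{q\})}\bigl(\alpha_a,\alpha_b,\alpha_{T\setminus\{q\}}\bigr),
\]
which is precisely the third line of \eqref{eq:FJT}, summed over $T$.
\end{enumerate}
Since the $\alpha_q$ with $q\in T$ are constant along the differentiation direction, differentiating this rewritten form is legitimate.

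Combining the four cases gives the polynomial identity
\[
        (\partial_a-\partial_b)\Phi_{m,j}(\alpha)=\sum_{\substack{T\subseteq B\setminus\{a,b\}\\|T|=m-1}}\bigl[(\partial_x-\partial_y)F_{J,T}\bigr](\alpha_a,\alpha_b;\alpha_T).
\]
Dividing by $\alpha_a-\alpha_b$ and using the definition \eqref{eq:QJT} then yields \eqref{eq:local-pair-diss}. Each $(\partial_x-\partial_y)F_{J,T}$ is antisymmetric in $(x,y)$, hence divisible by $x-y$. Therefore both sides are polynomials, and the identity extends to $\alpha_a=\alpha_b$ by continuity.

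The main obstacle is the bookkeeping in the doubly-marked case (iv). One must justify replacing $1-\alpha_S$ by $\sum_q\alpha_q$ before differentiating rather than after. If it were replaced after differentiating, the derivative of the factor $1-\alpha_a-\alpha_b-\cdots$ would produce a term $\mp R$ that vanishes under $\partial_a-\partial_b$ anyway, since $\partial_a-\partial_b$ kills $\alpha_a+\alpha_b$. I would note this explicitly to make the substitution airtight. Beyond that, one only has to check that the reindexing counts each $(T,q)$ exactly once.
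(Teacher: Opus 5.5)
Your proposal is correct and follows the paper's proof essentially step for step. It uses the same split of the marked sets $S$ by $S\cap\{a,b\}$, the same identification of the singly-marked terms with the first two lines of $F_{J,T}$, and the same rewriting of $1-\alpha_a-\alpha_b-\alpha_{T'}$ as $\sum_q\alpha_q$ with reindexing $T=T'\cup\{q\}$ in the doubly-marked case.

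The only difference is the order of operations in that doubly-marked step. You substitute the simplex constraint before differentiating, which is justified because $e_a-e_b$ is tangent to the affine hyperplane $\sum_{i\in B}\alpha_i=1$. The paper instead takes ambient derivatives first, notes that $\partial_x-\partial_y$ annihilates the linear coefficient, and only then evaluates that coefficient at the simplex point. That is exactly the cross-check you describe in your last paragraph.
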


\begin{proof}
Write $x=\alpha_a$, $y=\alpha_b$, and $z_r=\alpha_r$ for $r\in B\setminus\{a,b\}$. Terms in \eqref{eq:Phi-def} whose marked set $S$ contains neither $a$ nor $b$ do not contribute to $\partial_a-\partial_b$.

First suppose $S$ contains exactly one of $a,b$. For every $T\subseteq B\setminus\{a,b\}$ with $|T|=m-1$, the two marked sets $\{a\}\cup T$ and $\{b\}\cup T$ contribute
\[
        (1-z_T-x)R_{J,\{x\}\cup T}(x,z)
        +(1-z_T-y)R_{J,\{y\}\cup T}(y,z).
\]
These are the first two terms in $F_{J,T}$.

Now suppose $S$ contains both $a$ and $b$; this case is absent when $m=1$. Write $S=\{a,b\}\cup L$, where $|L|=m-2$. The contribution of this marked set to $\Phi_{m,j}$ is
\[
        (1-x-y-z_L)R_{J,\{x,y\}\cup L}(x,y,z_L).
\]
In the difference $\partial_x-\partial_y$, the derivatives of the coefficient $1-x-y-z_L$ cancel. Hence this marked set contributes
\[
        (1-x-y-z_L)(\partial_x-\partial_y)
        R_{J,\{x,y\}\cup L}(x,y,z_L).
\]
All partial derivatives in this lemma are ambient derivatives. The simplex constraint is used only after those derivatives have been taken. The next step uses that constraint and is therefore an identity after evaluation at the given simplex point $\alpha$, not an ambient polynomial identity in free variables. At that point,
\[
        1-x-y-z_L
        =\sum_{q\in B\setminus(\{a,b\}\cup L)}z_q.
\]
For each such $q$, put $T=L\cup\{q\}$. Then $|T|=m-1$, $q\in T$, and the corresponding evaluated contribution is
\[
        z_q(\partial_x-\partial_y)
        R_{J,\{x,y\}\cup(T\setminus\{q\})}(x,y,z_{T\setminus\{q\}}),
\]
which is the derivative of the third term in $F_{J,T}$. Combining the exactly-one terms with these evaluated both-marked terms gives, at the point $\alpha$,
\[
        (\partial_a-\partial_b)\Phi_{m,j}(\alpha)
        =\sum_{\substack{T\subseteq B\setminus\{a,b\}\\ |T|=m-1}}
        (\partial_x-\partial_y)F_{J,T}(x,y;z)\big|_{x=\alpha_a,\,y=\alpha_b,\,z=\alpha_T}.
\]
Dividing by $-(\alpha_a-\alpha_b)$ and using \eqref{eq:QJT} proves \eqref{eq:local-pair-diss}; the case $\alpha_a=\alpha_b$ follows by continuity.
\end{proof}

By Lemma~\ref{lem:local}, every polynomial $Q_{J,T}$ in \eqref{eq:local-pair-diss} is coefficientwise nonnegative and nonzero. Combining \eqref{eq:alpha-prime}, \eqref{eq:pair-identity}, and \eqref{eq:local-pair-diss}, we obtain
\begin{equation}\label{eq:local-derivative}
\begin{aligned}
        \frac{\dd}{\dd\theta}\Phi_{m,j}(\alpha^B)
        =-&\frac{1}{N\theta p_B}
        \sum_{a<b\in B}(\alpha_a-\alpha_b)^2 \\
        &\times
        \sum_{\substack{T\subseteq B\setminus\{a,b\}\\ |T|=m-1}}
        Q_{J,T}(\alpha_a,\alpha_b;\alpha_T).
\end{aligned}
\end{equation}

Now insert \eqref{eq:local-derivative} into the alternating expansion \eqref{eq:subset-expansion} and group the result by an unordered pair $\{a,b\}$. Since
\[
        \alpha_a-\alpha_b=\frac{p_a-p_b}{p_B},
\]
each monomial
\[
        c\,\alpha_a^r\alpha_b^s\prod_{i\in T}\alpha_i^{\nu_i}
        \qquad (c\ge0)
\]
of a polynomial $Q_{J,T}$ contributes a term of the form
\begin{equation}\label{eq:monomial-contribution}
        -\frac{1}{N\theta}(p_a-p_b)^2c
        \sum_{R\subseteq C}(-1)^{|R|}
        \frac{p_a^rp_b^s\prod_{i\in T}p_i^{\nu_i}}
        {(p_a+p_b+p_T+p_R)^L},
\end{equation}
where
\[
        C=[N]\setminus(\{a,b\}\cup T),
\]
and $L=r+s+\sum_{i\in T}\nu_i+3$ is a positive integer. The three additional powers in $L$ come from the factor $1/p_B$ in \eqref{eq:local-derivative} and from $(\alpha_a-\alpha_b)^2=(p_a-p_b)^2/p_B^2$. The sign $(-1)^{|R|}$ is exactly the sign in \eqref{eq:subset-expansion}, because $|B|=m+1+|R|$ when $B=\{a,b\}\cup T\cup R$.

The alternating denominator sum in \eqref{eq:monomial-contribution} is strictly positive. Indeed,
\begin{equation}\label{eq:laplace-positive}
\begin{aligned}
        \sum_{R\subseteq C}(-1)^{|R|}
        \frac{1}{(p_a+p_b+p_T+p_R)^L}
        &=\frac{1}{(L-1)!}
        \int_0^\infty t^{L-1}\e^{-(p_a+p_b+p_T)t}
        \prod_{\ell\in C}(1-\e^{-p_\ell t})\dd t  \\
        &>0.
\end{aligned}
\end{equation}
The strict positivity uses $p_i>0$ for every $i$; if $C=\varnothing$, the empty product is one.

The preceding calculation gives the following explicit positive-kernel form of the radial derivative. This is the main structural certificate behind the theorem.

\begin{theorem}[Positive-kernel radial derivative]\label{thm:positive-kernel}
Let $N\ge2$, $j\ge2$, and $1\le m\le N-1$. Along every ray
\[
        p_i(\theta)=\frac1N+\theta h_i,
        \qquad \sum_{i=1}^Nh_i=0,
        \qquad 0<\theta\le1,
\]
inside the positive simplex, there exist kernels $K_{ab}^{(m,j)}(p(\theta))>0$ such that
\begin{equation}\label{eq:factorial-derivative-factor}
        \frac{\dd}{\dd\theta}C_m^{(j)}(p(\theta))
        =-\frac{1}{N\theta}
        \sum_{1\le a<b\le N}(p_a(\theta)-p_b(\theta))^2K_{ab}^{(m,j)}(p(\theta)).
\end{equation}
\end{theorem}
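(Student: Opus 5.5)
The plan is to assemble the theorem directly from the computations already carried out in this section. First, differentiate the normalized alternating expansion of Proposition~\ref{prop:subset-expansion} term by term in $\theta$. The sum is finite and each $\Phi_{m,j}(\alpha^B)$ is a polynomial in $\alpha^B$ by \eqref{eq:R-integral}, so this step needs no care. For each $B$ with $|B|\ge m+1$ I would use the chain rule together with \eqref{eq:alpha-prime}, the pair identity \eqref{eq:pair-identity}, and Lemma~\ref{lem:pair-decomp}. These give \eqref{eq:local-derivative}, so that $\frac{\dd}{\dd\theta}C_m^{(j)}$ becomes an alternating sum over $B$ of negative multiples of squared pair differences $(\alpha_a-\alpha_b)^2$. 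Each such difference carries the nonnegative factor $\sum_T Q_{J,T}$.

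Next, regroup everything by the unordered pair $\{a,b\}$. For a fixed pair, a subset $B$ contributes exactly when $B=\{a,b\}\cup T\cup R$, where $|T|=m-1$ and $R\subseteq C=[N]\setminus(\{a,b\}\cup T)$. The sign $(-1)^{|B|-m-1}$ equals $(-1)^{|R|}$. I would expand each $Q_{J,T}$ into monomials with coefficients $c\ge0$ and convert $\alpha$'s back to $p$'s, which produces \eqref{eq:monomial-contribution}. This step has one bookkeeping subtlety: the prefactor $1/(N\theta p_B)$ and $(\alpha_a-\alpha_b)^2=(p_a-p_b)^2/p_B^2$ must combine with the monomial's $p_B^{-(r+s+|\nu|)}$ into the single power $L$. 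The common factor $-(p_a-p_b)^2/(N\theta)$ is then pulled out, and I would define
\[
K_{ab}^{(m,j)}(p)=\sum_{\substack{T\subseteq[N]\setminus\{a,b\}\\|T|=m-1}}\sum_{\text{monomials } c\,x^ry^s z^\nu \text{ of } Q_{J,T}} c\,p_a^rp_b^s\prod_{i\in T}p_i^{\nu_i}\sum_{R\subseteq C}\frac{(-1)^{|R|}}{(p_a+p_b+p_T+p_R)^{L}}.
\]
Since the $p$-monomials do not depend on $R$, they factor out of the inner alternating sum.

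For strict positivity I would apply the Laplace representation \eqref{eq:laplace-positive}. It makes each inner alternating sum strictly positive, because all $p_i(\theta)>0$ along the ray inside the positive simplex. Every monomial weight $p_a^rp_b^s\prod p_i^{\nu_i}$ is also positive. The coefficients $c$ are nonnegative by Lemma~\ref{lem:local}, so $K_{ab}\ge0$. To get $K_{ab}>0$, it is enough to have one $T$ of size $m-1$ in $[N]\setminus\{a,b\}$, which holds because $m-1\le N-2$, and one monomial with $c>0$. The latter is exactly the nontriviality part of Lemma~\ref{lem:local}.

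I expect the main obstacle to be the legitimacy of the regrouping rather than positivity. Lemma~\ref{lem:pair-decomp} holds only after evaluation on the simplex over $B$, because it uses $1-x-y-z_L=\sum_q z_q$ there. One must check that this evaluated identity is what enters \eqref{eq:local-derivative}, and that the subsequent homogenization in $p$ (multiplying out by powers of $p_B$) is applied to already-evaluated quantities. It is also worth stating that the case $\alpha_a=\alpha_b$ is covered by polynomial continuation, so the formula holds for every $\theta\in(0,1]$, and that $\theta>0$ is what makes the factor $1/(N\theta)$ in \eqref{eq:alpha-prime} meaningful.
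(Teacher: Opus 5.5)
Your proposal is correct and follows the paper's proof essentially step for step. You obtain \eqref{eq:local-derivative} from \eqref{eq:alpha-prime}, the pair identity and Lemma~\ref{lem:pair-decomp}, insert it into \eqref{eq:subset-expansion}, and regroup by the pair $\{a,b\}$ with $B=\{a,b\}\cup T\cup R$ and sign $(-1)^{|R|}$; this produces exactly the paper's kernel $K_{ab}^{(m,j)}$, whose strict positivity comes, as in the paper, from \eqref{eq:laplace-positive}, the nonnegativity and nontriviality of $Q_{J,T}$ in Lemma~\ref{lem:local}, and the existence of some $T$ with $|T|=m-1$ since $m\le N-1$.
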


\begin{proof}
The identity follows by inserting \eqref{eq:local-derivative} into the alternating expansion \eqref{eq:subset-expansion}, grouping the resulting terms by the unordered pair $\{a,b\}$, and using the monomial contribution \eqref{eq:monomial-contribution}. To make the kernel explicit, fix $a<b$. For every set
\[
        T\subseteq [N]\setminus\{a,b\},\qquad |T|=m-1,
\]
write
\[
        Q_{J,T}(x,y;z_T)
        =\sum_{r,s,\nu}c_{r,s,\nu}^{T}x^ry^s z_T^\nu,
        \qquad c_{r,s,\nu}^{T}\ge0,
\]
where the sum is finite and ranges over the monomials of $Q_{J,T}$. For such a monomial put
\[
        C=[N]\setminus(\{a,b\}\cup T),
        \qquad L=r+s+|\nu|+3.
\]
The corresponding contribution to the coefficient of $-(p_a-p_b)^2/(N\theta)$ is
\[
        c_{r,s,\nu}^{T}p_a^rp_b^s\prod_{i\in T}p_i^{\nu_i}
        \sum_{R\subseteq C}(-1)^{|R|}
        \frac{1}{(p_a+p_b+p_T+p_R)^L}.
\]
Thus $K_{ab}^{(m,j)}(p)$ is obtained by summing these displayed quantities over all admissible $T$ and over all monomials of $Q_{J,T}$.

Each summand is nonnegative. Indeed $c_{r,s,\nu}^{T}\ge0$ by Lemma~\ref{lem:local}, the monomial value in the positive variables $p_i$ is nonnegative, and the alternating denominator sum is strictly positive by \eqref{eq:laplace-positive}. Moreover the whole kernel is strictly positive. Since $m\le N-1$, for every pair $a,b$ there exists at least one set $T\subseteq[N]\setminus\{a,b\}$ with $|T|=m-1$. For any such $T$, Lemma~\ref{lem:local} says that $Q_{J,T}$ is not the zero polynomial and is coefficientwise nonnegative, so it has at least one positive coefficient $c_{r,s,\nu}^{T}>0$. Because all $p_i$ are positive and the associated Laplace sum is strictly positive, this monomial contributes strictly positively to $K_{ab}^{(m,j)}(p)$. Hence $K_{ab}^{(m,j)}(p)>0$ for every pair $a<b$.
\end{proof}

\begin{theorem}[Radial shifted factorial-moment monotonicity]\label{thm:factorial}
For every $N\ge2$, $j\ge2$, and $1\le m\le N-1$, and for every positive nonuniform probability vector $p$,
\[
        \theta\mapsto C_m^{(j)}(u+\theta(p-u))
\]
is strictly decreasing on $(0,1]$.
\end{theorem}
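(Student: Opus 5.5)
The plan is to read Theorem~\ref{thm:factorial} off from the positive-kernel identity of Theorem~\ref{thm:positive-kernel}. Set $h=p-u$, so $\sum_i h_i=0$ and $h\ne0$ because $p\ne u$. Then $p(\theta)=u+\theta h$ is exactly the ray treated in Theorem~\ref{thm:positive-kernel}. For $\theta\in(0,1]$ the point $p(\theta)$ is a convex combination of $u$ and $p$, and both have positive entries, so $p(\theta)$ stays in the positive simplex. Hence the hypotheses of Theorem~\ref{thm:positive-kernel} hold at every $\theta\in(0,1]$.

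Next we check differentiability. By Proposition~\ref{prop:subset-expansion}, $C_m^{(j)}(p(\theta))$ is a finite alternating sum of the terms $\Phi_{m,j}(\alpha^B(\theta))$. Each $\Phi_{m,j}$ is a polynomial in $\alpha$, by \eqref{eq:R-integral} and the definition of $R_{J,I}$. Each $\alpha^B(\theta)$ is a rational function of $\theta$ whose denominator $p_B(\theta)$ is positive. So $C_m^{(j)}(p(\theta))$ is $C^1$ on $(0,1]$, indeed up to $\theta=0$, and identity \eqref{eq:factorial-derivative-factor} holds pointwise on $(0,1]$.

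The sign argument comes next. Since $h\ne 0$ and $\sum_i h_i=0$, some pair $a<b$ has $h_a\ne h_b$. Then
\[
p_a(\theta)-p_b(\theta)=\theta(h_a-h_b)\ne0\qquad(\theta>0).
\]
Every term $(p_a-p_b)^2K_{ab}^{(m,j)}$ is nonnegative, since $K_{ab}^{(m,j)}>0$. This particular term is strictly positive. The prefactor $-1/(N\theta)$ is negative on $(0,1]$. Therefore
\[
\frac{\dd}{\dd\theta}C_m^{(j)}(p(\theta))<0\qquad\text{for every }\theta\in(0,1].
\]

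Finally, take $0<\theta_1<\theta_2\le1$. By the mean value theorem on $[\theta_1,\theta_2]$, or by integrating the strictly negative continuous derivative, $C_m^{(j)}(p(\theta_2))<C_m^{(j)}(p(\theta_1))$. This is strict decrease on $(0,1]$. The same argument extends to $[0,1]$ by continuity at $\theta=0$, which gives $C_m^{(j)}(p)<C_m^{(j)}(u)$.

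The main obstacle is not in this step. It is the strict positivity of the kernels $K_{ab}^{(m,j)}$, which Theorem~\ref{thm:positive-kernel} already supplies through Lemma~\ref{lem:local} and \eqref{eq:laplace-positive}. Here we only need the bookkeeping above: the hypotheses of Theorem~\ref{thm:positive-kernel} hold along the whole ray, at least one pair difference is nonzero, and the derivative is continuous so that the mean value step applies.
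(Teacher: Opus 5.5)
Your proposal is correct and follows the paper's proof essentially verbatim: you apply Theorem~\ref{thm:positive-kernel} along the ray $p(\theta)=u+\theta(p-u)$, use that some $(p_a(\theta)-p_b(\theta))^2=\theta^2(h_a-h_b)^2>0$ for $\theta>0$ together with $K_{ab}^{(m,j)}>0$ to get a strictly negative derivative, and integrate. Your explicit $C^1$ check via the finite subset expansion (polynomials $\Phi_{m,j}$ composed with rational $\alpha^B(\theta)$) is extra bookkeeping that the paper leaves implicit.
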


\begin{proof}
Apply Theorem~\ref{thm:positive-kernel} to the ray $p(\theta)=u+\theta(p-u)$. Because $p\ne u$, the vector $p(\theta)$ is nonuniform for every $0<\theta\le1$. Hence, for each such $\theta$, at least one squared difference $(p_a(\theta)-p_b(\theta))^2$ is positive. Since every kernel in \eqref{eq:factorial-derivative-factor} is strictly positive,
\begin{equation}\label{eq:Cm-strict}
        \frac{\dd}{\dd\theta}C_m^{(j)}(p(\theta))<0
        \qquad (0<\theta\le1).
\end{equation}
Integrating this strict inequality over any interval $[\theta_1,\theta_2]\subset(0,1]$, with $\theta_1<\theta_2$, gives
\[
        C_m^{(j)}(p(\theta_2))<C_m^{(j)}(p(\theta_1)).
\]
Thus $\theta\mapsto C_m^{(j)}(u+\theta(p-u))$ is strictly decreasing on $(0,1]$.
\end{proof}

The main theorem follows immediately.

\begin{proof}[Proof of Theorem~\ref{thm:main}]
Write $q=z-1$ and $p(\theta)=u+\theta(p-u)$. By Proposition~\ref{prop:pgf-identity},
\[
        \E_{p(\theta)}z^{U_j^N}
        =z\sum_{m=0}^{N-1}q^m C_m^{(j)}(p(\theta)).
\]
Here $C_0^{(j)}\equiv1$. If $z>1$, then $q>0$, and Theorem~\ref{thm:factorial} gives
\[
        \frac{\dd}{\dd\theta}\E_{p(\theta)}z^{U_j^N}<0
\]
whenever $p(\theta)$ is nonuniform. Since $p\ne u$ implies $p(\theta)\ne u$ for every $\theta>0$, the displayed derivative is strictly negative on $(0,1]$. The left side is continuous at $\theta=0$; for instance, this follows from the finite normalized subset expansion. Integrating the strict derivative on $[\varepsilon,1]$ and then letting $\varepsilon\downarrow0$ gives strict radial PGF monotonicity from the uniform endpoint:
\[
        \E_p z^{U_j^N}<\E_u z^{U_j^N}\qquad (z>1).
\]

The binomial-moment monotonicity is obtained from the coefficient theorem, not from differentiating a pointwise PGF inequality. For $1\le m\le N$,
\[
        \binom{U_j^N}{m}
        =\binom{U_j^N-1}{m}+\binom{U_j^N-1}{m-1}.
\]
Therefore
\[
        \E_p\binom{U_j^N}{m}
        =C_m^{(j)}(p)+C_{m-1}^{(j)}(p),
\]
where $C_N^{(j)}\equiv0$ and $C_0^{(j)}\equiv1$. Theorem~\ref{thm:factorial} shows that this expression is strictly decreasing in $\theta$ on $(0,1]$: for $m=1$, the strict term is $C_1^{(j)}$; for $2\le m\le N-1$, both displayed terms are monotone and at least one is strict; and for $m=N$, the strict term is $C_{N-1}^{(j)}$. Continuity at $\theta=0$ then gives the endpoint inequality against the uniform distribution. For $m>N$, both binomial coefficients vanish identically.
\end{proof}

\begin{definition}[Finite absolutely-monotone order]
For random variables $X,Y$ supported on $\{0,1,\ldots,N\}$, write
$X\le_{\rm abm}Y$ if
\[
        \E f(X)\le \E f(Y)
\]
for every function $f:\{0,1,\ldots,N\}\to\R$ with binomial-basis expansion
\[
        f(k)=\sum_{m=0}^N a_m\binom{k}{m},
        \qquad a_m\ge0\quad(m\ge1).
\]
Equivalently, $X\le_{\rm abm}Y$ iff
\[
        \E\binom{X}{m}\le \E\binom{Y}{m}
        \qquad(1\le m\le N).
\]
\end{definition}

\begin{corollary}[Radial absolutely-monotone order]\label{cor:abm}
Fix $N\ge2$, $j\ge2$, and a positive nonuniform probability vector $p$. Let
\[
        p(\theta)=u+\theta(p-u),\qquad 0\le\theta\le1.
\]
If $0\le\theta_1<\theta_2\le1$, then
\[
        U_{j,p(\theta_2)}^N\le_{\rm abm}U_{j,p(\theta_1)}^N.
\]
Equivalently, for every function
\[
        f(k)=\sum_{m=0}^N a_m\binom{k}{m},
        \qquad a_m\ge0\quad(m\ge1),
\]
one has
\[
        \E_{p(\theta_2)}f(U_j^N)
        \le
        \E_{p(\theta_1)}f(U_j^N).
\]
The inequality is strict whenever at least one coefficient $a_m$ with $m\ge1$ is positive.
\end{corollary}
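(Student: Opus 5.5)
The plan is to reduce the corollary to the binomial-moment part of Theorem~\ref{thm:main}, which gives the monotonicity on $(0,1]$. The only additional work is the endpoint $\theta_1=0$, which we handle by continuity. Fix $f(k)=\sum_{m=0}^N a_m\binom{k}{m}$ with $a_m\ge0$ for $m\ge1$. By linearity of expectation,
\[
        \E_{p(\theta)}f(U_j^N)=a_0+\sum_{m=1}^N a_m\,\E_{p(\theta)}\binom{U_j^N}{m},
\]
since $U_j^N\le N$ and every term is a finite sum. It therefore suffices to show that for each $1\le m\le N$ the map $g_m(\theta):=\E_{p(\theta)}\binom{U_j^N}{m}$ satisfies $g_m(\theta_2)<g_m(\theta_1)$ whenever $0\le\theta_1<\theta_2\le1$. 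Multiplying by $a_m\ge0$ and summing then gives the weak inequality. If some $a_m>0$ with $m\ge1$, the corresponding strict inequality makes the sum strict.

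For $0<\theta_1<\theta_2\le1$, strict decrease of $g_m$ is exactly the second assertion of Theorem~\ref{thm:main}. That assertion was obtained from Theorem~\ref{thm:factorial} via $g_m=C_m^{(j)}+C_{m-1}^{(j)}$, with the conventions $C_0^{(j)}\equiv1$ and $C_N^{(j)}\equiv0$.

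For $\theta_1=0$ we argue by continuity. By Proposition~\ref{prop:subset-expansion}, each $C_m^{(j)}(p(\theta))$ is a finite signed sum of the functions $\Phi_{m,j}(\alpha^B(\theta))$. Each $\Phi_{m,j}$ is a polynomial in $\alpha$ by \eqref{eq:R-integral}, and $\alpha^B(\theta)=p_B(\theta)^{-1}(p_i(\theta))_{i\in B}$ is continuous on $[0,1]$ because $p_B(\theta)\ge\min_B p_i(\theta)>0$ along the segment. Hence $g_m$ is continuous on $[0,1]$. The strict inequality from $0$ then follows: choose $\theta'\in(0,\theta_2)$. Strict decrease on $(0,1]$ gives $g_m(\theta)>g_m(\theta')$ for every $\theta\in(0,\theta')$. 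Letting $\theta\downarrow0$ yields $g_m(0)\ge g_m(\theta')>g_m(\theta_2)$.

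The equivalence between the order $\le_{\rm abm}$ and the displayed function class is the definitional statement, since each binomial $\binom{k}{m}$ with $m\ge1$ is itself an admissible $f$. So once the moment inequalities are established, nothing more is needed there.

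The only step needing care is the endpoint $\theta_1=0$, where the derivative formula of Theorem~\ref{thm:positive-kernel} carries the factor $1/\theta$ and cannot be used directly. The continuity argument above avoids the derivative entirely. It uses the intermediate point $\theta'$ to retain strictness, because a plain limit of strict inequalities would only give a weak one.
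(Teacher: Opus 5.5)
Your proposal is correct and follows the paper's proof. Both reduce the corollary to the strict binomial-moment monotonicity in Theorem~\ref{thm:main} and take nonnegative linear combinations in the binomial basis, with $a_0$ cancelling. The paper handles $\theta_1=0$ by citing the endpoint clause of Theorem~\ref{thm:main}, applied to the nonuniform vector $p(\theta_2)$; you instead re-derive it by continuity through an intermediate point $\theta'$, which correctly keeps the inequality strict.
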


\begin{proof}
The binomial-moment part of Theorem~\ref{thm:main} gives, for every $1\le m\le N$,
\[
        \E_{p(\theta_2)}\binom{U_j^N}{m}
        <
        \E_{p(\theta_1)}\binom{U_j^N}{m}
\]
whenever $0\le\theta_1<\theta_2\le1$ and $p\ne u$. Taking nonnegative linear combinations of these inequalities in the binomial basis proves the claim. The coefficient $a_0$ contributes the same constant to both sides.
\end{proof}

\begin{remark}[Open problems: stochastic and increasing-convex orders]\label{rem:open-orders}
The natural next strengthenings would be the radial stochastic order
\[
        U_{j,p(\theta_2)}^N\le_{\rm st}U_{j,p(\theta_1)}^N
        \qquad(0\le\theta_1<\theta_2\le1)
\]
and the radial increasing-convex order
\[
        U_{j,p(\theta_2)}^N\le_{\rm icx}U_{j,p(\theta_1)}^N.
\]
The present factorial-moment theorem does not imply either order. Tail indicators and stop-loss functions have alternating binomial-basis expansions, so these stronger orders would require new positive tail-kernel or stop-loss-kernel formulas. We leave these as open problems.
\end{remark}

\section{Checks and special cases}

\subsection{Recovery of the first moment}

For $m=1$, Theorem~\ref{thm:factorial} says that
\[
        \E_p(U_j^N-1)
\]
is strictly radially maximized at uniform. Since the final coupon contributes one deterministic empty space, this is equivalent to radial monotonicity of $\E_pU_j^N$. Thus Theorem~\ref{thm:main} strengthens the finite-$N$ maximum principle for the expected number of empty spaces in the sibling's album.

\subsection{The oldest sibling}

For $j=2$, $A_2(x)=x$ and
\[
        \Phi_{m,2}(\alpha)=(m+1)!e_{m+1}(\alpha).
\]
In this case \eqref{eq:factorial-derivative-factor} specializes to
\[
        \frac{\dd}{\dd\theta}C_m^{(2)}(p(\theta))
        =-\frac{(m+1)!}{N\theta}
        \sum_{a<b}(p_a-p_b)^2K_{ab}^{(m)}(p),
\]
where
\[
        K_{ab}^{(m)}(p)
        =\frac{1}{(m+1)!}\int_0^\infty
        t^{m+1}\e^{-(p_a+p_b)t}
        D_{m-1}^{ab}(t)\dd t
\]
and
\[
        D_{m-1}^{ab}(t)
        =\sum_{\substack{L\subseteq[N]\setminus\{a,b\}\\ |L|=m-1}}
        \left(\prod_{\ell\in L}p_\ell\e^{-p_\ell t}\right)
        \prod_{r\notin L\cup\{a,b\}}(1-\e^{-p_rt}).
\]
This gives a particularly transparent proof of full PGF radial monotonicity for $U_2^N$ on the range $z>1$. As a boundary sanity check, the factorial-moment conclusion with $j=2$ and $m=N$ says
\[
        \Pbb_p\{U_2^N=N\}\le \Pbb_u\{U_2^N=N\}.
\]
The event $U_2^N=N$ means that all coupon types have appeared exactly once when the main collector finishes, so its probability is $N!\prod_{i=1}^N p_i$; the displayed inequality is therefore the arithmetic--geometric mean inequality $\prod_i p_i\le N^{-N}$.

\subsection{Why cumulative empty-space events are necessary}

The exact-count pieces in $A_j$ need not be radially extremal. Already for $N=2$, an exact-count contribution of order $r$ has the form
\[
        p(1-p)^r+(1-p)p^r,
\]
which is not maximized at $p=1/2$ for all $r$. The theorem works because
\[
        A_j(x)=x+\frac{x^2}{2!}+\cdots+\frac{x^{j-1}}{(j-1)!}
\]
is cumulative. The finite cancellation in Lemma~\ref{lem:local} is precisely the cancellation that fails for individual exact-count components.

\section{Conclusion}

Uniform probabilities do not merely maximize the expected number of empty spaces in the siblings problem. Along every nonconstant ray from the uniform vector, they maximize the empty-space probability generating function for every $z>1$ and all binomial moments of the number of empty spaces in the sibling's album. Equivalently, the laws decrease away from uniform in the finite absolutely-monotone/binomial-transform order. In the complementary range $0<z<1$, the transform moves in the opposite direction: moving away from uniform strictly increases $\E z^{U_j^N}$, giving radial Laplace-transform order. The right-PGF and factorial-moment proof is finite-dimensional and exact: after Poissonization and a marked coefficient expansion, the radial derivative of each factorial coefficient factors into a negative sum of squared pair differences times strictly positive kernels. The Laplace-transform proof is probabilistic, using a Gamma-mixture race representation of the same Poissonized PGF identity.

\end{document}